\newcommand{\Prob}{\mathbb{P}}
\newcommand{\E}{\mathbb{E}}
\newcommand{\C}{\mathbb{C}}
\newcommand{\N}{\mathbb{N}}
\renewcommand\Im{\operatorname{Im}}
\newcommand{\eps}{\varepsilon}
\title{Rate of Convergence in Multiple SLE using Random Matrix Theory}
\author[A. Campbell]{Andrew Campbell}
	\address{Department of Mathematics\\ University of Colorado\\ Campus Box 395\\ Boulder, CO 80309-0395\\USA}
	\email{andrew.j.campbell@colorado.edu}
	\author[K. Luh]{Kyle Luh}
	\address{Department of Mathematics\\ University of Colorado\\ Campus Box 395\\ Boulder, CO 80309-0395\\USA}
	\email{kyle.luh@colorado.edu}
	\thanks{K. Luh was supported in part by the Ralph E. Powe Junior Faculty Enhancement Award.}
	\author[V. Margarint]{Vlad Margarint}
    \address{Department of Mathematics\\ University of Colorado\\ Campus Box 395\\ Boulder, CO 80309-0395\\USA}
	\email{vlad.dumitrumargarint@colorado.edu}
\theoremstyle{plain}
\newtheorem{theorem}{Theorem}[section]
\newtheorem{proposition}[theorem]{Proposition}
\newtheorem{lemma}[theorem]{Lemma}
\theoremstyle{definition}
\newtheorem{definition}[theorem]{Definition}
\newtheorem{remark}[theorem]{Remark}
\newcommand{\RR}{\mathbb{R}}
\newcommand{\CC}{\mathbb{C}}
\newcommand{\EE}{\mathbb{E}}
\DeclareMathOperator{\tr}{tr}
\begin{document}

\maketitle

\begin{abstract}
We provide an order of convergence for a version of the Carath\'eodory convergence for the multiple SLE model with a Dyson Brownian motion driver towards its hydrodynamic limit, for $\beta=1$ and $\beta=2$. The result is obtained by combining techniques from the field of Schramm-Loewner Evolutions with modern techniques from random matrices. Our approach shows how one can apply modern tools used in the proof of universality in random matrix theory, in the field of Schramm-Loewner Evolutions. 
\end{abstract}


\section{Introduction and main results}
Schramm-Loewner Evolution (SLE) and random matrix theory (RMT) are two active and well-studied fields of research within modern
probability theory \cite{lawler2008conformally, akemann2011oxford}. The SLE was introduced by Oded Schramm in 2000 in his study of scaling limits of various discrete processes \cite{schramm2000scaling}.  RMT appeared earlier in the statistical work of Wishart \cite{wishart1928generalised} and the pioneering physics of Wigner \cite{wigner1955}. Both SLE and RMT have been thriving areas of mathematical research since their advent. 

When studying SLE theory, one introduces the notion of compact hulls, which are compact sets with simply connected complements in the upper half-plane.
If $K_t$ is a growing set of hulls parameterized by $t \in [0, T]$ and the growth is local in some sense, then it is known that $g_t:= g_{K_t}$ obeys the Loewner differential equation
\[
\partial_t g_t(z) =  \frac{2}{g_t(z) - W_t}
\]
where $W_t$ is referred to as the driving function and captures the local growth of $K_t$. 
SLE are the random curves corresponding to the $g_t$ when the driving function is a constant multiple of Brownian motion, that we denote by $\sqrt{\kappa}B_t$,  for $\kappa \geq 0$. With probability one, $g_t$ is continuous up to the boundary and the limit
$$
\gamma(t)=\lim_{y\to 0} g_t^{-1}(\sqrt{\kappa}B_t+iy)
$$
exists and is continuous in time, by the Rohde-Schramm Theorem \cite{RohdeSchramm}. The curve $\gamma(t)$ is called the SLE trace.
Also, it can be shown that with probability one, $g_t$ is a continuous family of conformal maps from $H_t$ to $\mathbb{H}$, where $H_t$ is the unbounded component of the complement in $\mathbb{H}$ of $\gamma(t),$ for $t \in [0, T]$ \cite{RohdeSchramm}.
Moreover, the nature of the curve changes as $\kappa$ increases from simple a.s.  when $\kappa \in [0,4]$, to having double points a.s. for $\kappa \in (4,8)$ and space-filling a.s., for $\kappa \geq 8$.
For different parameters $\kappa$, the SLE models the scaling limits of an astoundingly diverse set of discrete models. For instance, it was proved in \cite{Lawlerschrammwerner}  that the scaling limit of the loop erased random walk (with the loops erased in a chronological order) converges in the scaling limit to $\text{SLE}_{\kappa}$ with  $\kappa = 2.$ Moreover, other two dimensional discrete models from Statistical Mechanics including the Ising model cluster boundaries, Gaussian free field interfaces, percolation on the triangular lattice at critical probability, and Uniform spanning trees were proved to converge in the scaling limit to SLE for values of $\kappa=3,$ $\kappa=4,$ $\kappa=6$ and $\kappa=8$ respectively in the series of works \cite{Stasising}, \cite{SLEGFF}, \cite{Smirnovpercolation}  and \cite{Lawlerschrammwerner}.

One can consider more generally the Loewner equation driven by a time-dependent real-valued measure $\mu_{t}$
$$
\frac{\partial}{\partial t} g_{t}(z)=\int_{\mathbb{R}} \frac{\mu_{t}(d x)}{g_{t}(z)-x}, \quad g_{0}(z)=z.
$$


When the driving measure $\mu_{t}$ is a Dirac-delta mass at location $\sqrt{\kappa}B_t$, we recover the previous SLE maps. In the case $\mu_{t}=\sum_{i=1}^{N} \omega_{i}(t) \delta_{U_{i}(t)}$, for some non-intersecting continuous functions $U_{i}(t) \in \mathbb{R}$ (called driving functions), and weights $\omega_{i}(t) \in \mathbb{R}^{+},$ we obtain the multi-slit Loewner equation with driving functions $U_i(t)$, $i=1,\cdots ,n$. In this work, we consider the case $\omega_{i}(t)=1/N$, for all $t \in [0, T].$ 

For a real parameter $\beta >0$, Dyson Brownian motion (DBM) is defined by the following system of N equations
\begin{equation}\label{dbm}
d\lambda^{(i)}_t=\frac{2}{\sqrt{N\beta}}dB_t^{(i)}+\frac{2}{N}\sum_{j \neq i}\frac{dt}{\lambda_t^{(j)}-\lambda_t^{(i)}},
\end{equation}
 for $i=1,2,...,N$.

Due to its connections with other fields, an important Loewner equation is the multiple SLE with DBM as a driver. The multiple SLE maps that are obtained when the driving measure is an empirical measure on $N$ DBM particles are denoted in this paper by $g_t^N(z).$  This model was introduced by Cardy in \cite{cardy2003stochastic}, and studied further by Lawler and Healey in \cite{lawlernew}, in connection with the quantum Calogero-Sutherland model and Conformal Field Theory. More works on the connection between Multiple SLE and CFT can be found in \cite{LenVik} and \cite{MultipleSLEcft}. In the case of $N=2$ curves, perturbations of this model in the parameter $\beta$ have been studied in \cite{JVperturbation}. We note that the parameters $\beta$ in the DBM model and $\kappa$ in SLE theory are related via $\beta=8/\kappa$.

We refer to the multiple SLE model with Dyson Brownian motion as a driver as the simultaneously growing multiple SLE model. There is also a version of the multiple SLE that has non-simultaneous growth that has received a lot of attention in the previous years. There have been several results on the multiple SLE model in both the upper half-plane and the unit disk versions \cite{Katoriwelding, LenVik, PeltolaWu, BefPeltolaWu, HotaS, hydrodinamiclimit, delMonacoetc, delMonaco2, dubedat, peltolakyto, oliversc, zh1, zh2, multipleSLE0}.

In \cite{delMS2016}, the authors consider the $N\rightarrow\infty$ limit of multiple SLE driven by DBM. In particular they show that the empirical measure of the initial positions converges to a probability measure $\mu_0$, then $g_t^N$ converges in distribution with respect to locally uniform convergence to $g_t^\infty$ solving \begin{equation}\label{eq:A:g infty def}
    \frac{\partial}{\partial t}g_t^\infty(z)=M_t^\infty(z),\qquad g_0(z)=z,
\end{equation} Where $M_t^\infty$ is a solution to the complex Burgers equation\begin{equation}\label{eq:A:Complex Burgers Equation}
         \begin{cases}
                  \frac{\partial M_t^\infty(z)}{\partial t}=-2M_t^\infty(z)\frac{\partial M_t^\infty(z)}{\partial z},\ t>0,\\
                  M_0^\infty(z)=\int_\RR\frac{2}{z-x}d\mu_0(x).
           \end{cases}
           \end{equation}
           Their result serves as the multiple SLE analog of Wigner's famous semicircle law in random matrix theory. We consider this model and we obtain more refined information by providing an order of convergence of this model in a weaker version of the Carath\'eodory type convergence. We aim in future works to study the full Carath\'eodory convergence by strengthening the estimates as we approach the multiple SLE hull.


In this work, we combine elements of the proof of Local Laws in random matrix theory, such as resolvent techniques, with elements of the SLE theory.  In other words, we apply modern techniques from random matrix theory to the analysis of SLE. 
 Local Laws are a very important research direction in random matrix theory in the last years (see \cite{Antti}, \cite{13}, \cite{14}, \cite{15}, \cite{16}, \cite{17}, \cite{18}, \cite{19}, \cite{20}, \cite{21}, \cite{22}, \cite{23}, \cite{VladRMT}, \cite{SeanVu}, \cite{32}, \cite{33}, \cite{34}, \cite{35}, \cite{36} for a non-exhaustive list). They are one of the fundamental ingredients in proving the universality of Wigner ensembles in random matrix theory (see \cite{erdos2017dynamical}). 

Given the outstanding developments in the proof of the universality in RMT using the analysis of the DBM, the interaction between multiple SLE and random matrices will provide many avenues to explore. The approach in the current work represents one of the possible directions of exploration between these two major fields of Probability theory. In a different direction, which we aim to explore in the future, one can study the geometry of the multiple SLE curves using the analysis of the Dyson Brownian motion drivers, as well as good approximation schemes of the model (see, for example, \cite{Jamespaper}, \cite{tranconvergence}, \cite{NVscheme} in the one SLE curve case). Yet another possibility is to study the continuity of the multiple SLE model in the parameter $\beta$, motivated by the great interest and progress throughout the years in this yet unresolved conjecture in the one SLE curve case (see \cite{BLM}, \cite{AtulVladpaper}, \cite{FrizYuan}, \cite{SLEcont}). In addition, the fact that the multiple SLE curves grow from the positions of the drivers along with some knowledge about the structure of the drivers gives the possibility of defining new observables in order to study the convergence of discrete models to the multiple SLE. Examples of such observables include the statistics of the $k^{th}$ smallest distance between drivers, for $k \geq 1$, (see \cite{BAB}) or the probability of having no drivers in a symmetric region about the origin (see \cite{Noeigen} for $\beta=2$).

 Although our result can be obtained for general bounded initial conditions, we state it in the case in which all the Dyson Brownian motion particles start from the origin. We prefer this choice for the simplicity of the notation and exposition.

\begin{theorem}\label{thm: Main Result}
Let $\beta=1$ or $\beta=2$, and let us consider Dyson Brownian motion beginning at the origin. Let $K_T$ be the multiple SLE hull at time $T>0.$ Then, for any $\eps >0$, for the multiple SLE maps for $N$ curves, we have that 
$$\sup_{t \in [0,T],\ z \in G}|g^N_t(z)-g^{\infty}_t(z)|=O\left(\frac{1}{N^{1/3-\eps}}\right),$$
with overwhelming probability\footnote{An event $E$ holds with overwhelming probability if, for every $p > 0$,  $\Prob(E) \geq 1 - O_p(n^{-p})$; see Definition \ref{def:events} for details.}, for a given $G \subset \mathbb{H}\setminus K_T.$
\end{theorem}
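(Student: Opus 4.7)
The plan is to analyze the discrepancy $f_t(z) := g_t^N(z) - g_t^\infty(z)$ via a Grönwall-type inequality driven by a local semicircle law for the Dyson Brownian motion. Writing $m_t^N(w) := \frac{1}{N}\sum_{i=1}^N (w - \lambda_t^{(i)})^{-1}$ for the empirical Stieltjes transform of the DBM and $m_t^\infty(w) := \tfrac12 M_t^\infty(w)$ for its deterministic counterpart, the Loewner and complex Burgers equations combine to give
\begin{equation*}
\dot f_t(z) \;=\; 2\bigl[m_t^N(g_t^N(z)) - m_t^\infty(g_t^N(z))\bigr] \;+\; 2\bigl[m_t^\infty(g_t^N(z)) - m_t^\infty(g_t^\infty(z))\bigr].
\end{equation*}
The first bracket is a local-law error, and the second is a Lipschitz term that will fuel the Grönwall amplification.

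The random matrix theoretic heart of the argument is a local semicircle law for $m_t^N$. For $\beta \in \{1,2\}$ I would invoke resolvent techniques — a self-consistent equation for $m_t^\infty$ coming from the semicircle flow started at a point mass at $0$, Schur complement expansions, the Ward identity, and concentration of quadratic forms — to obtain, with overwhelming probability,
\begin{equation*}
|m_t^N(w) - m_t^\infty(w)| \;=\; O\!\bigl(N^{-1+\eps}/\Im w\bigr)
\end{equation*}
uniformly in $t \in [0,T]$ and in $w$ with $\Im w$ above a small power of $N^{-1}$. Uniformity in $t$ I would recover by discretizing $[0,T]$ on a fine grid and using Hölder continuity of the DBM paths, and uniformity in $w$ by a union bound over a polynomial net together with the Lipschitz continuity of the resolvent. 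On the deterministic side I use the elementary bound $|m_t^\infty(w_1) - m_t^\infty(w_2)| \leq |w_1 - w_2|/\min(\Im w_1, \Im w_2)^2$.

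Combining these ingredients I arrive at an integral inequality
\begin{equation*}
|f_t(z)| \;\leq\; C\int_0^t \frac{N^{-1+\eps}}{\Im g_s^N(z)}\,ds \;+\; C\int_0^t \frac{|f_s(z)|}{(\Im g_s^N(z))^2}\,ds,
\end{equation*}
and the announced $N^{-1/3+\eps}$ exponent arises by optimizing the scale at which the local law is applied against the $\Im^{-2}$ loss from the Lipschitz amplification. A continuity bootstrap assuming $|f_t(z)| \leq \tfrac12 \Im g_t^\infty(z)$ identifies $\Im g_t^N(z)$ with $\Im g_t^\infty(z)$ up to a constant factor — the lower bound on the latter over $(t,z) \in [0,T] \times G$ is strictly positive since $G \subset \HH \setminus K_T$ — so that the self-consistent balance closes. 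Uniformity in $z \in G$ then follows by running the argument on the nodes of a polynomial net in $G$ and interpolating via equicontinuity of the conformal maps.

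The main obstacle I anticipate is the local law itself: the DBM here starts from a Dirac mass at the origin, a singular initial condition for which the standard stationary local semicircle laws do not apply verbatim. Short-time versions must be derived from the DBM system directly, with explicit control of the constants as functions of both $t$ and of $\Im w$ relative to the instantaneous spectral edge $\sim \sqrt{t}$. A second delicate point is maintaining the bootstrap on $\Im g_t^N(z)$ uniformly in $z$ as $G$ is allowed to approach $K_T$, which is the quantitative mechanism that degrades the naive $N^{-1+\eps}$ rate from the local law to the $N^{-1/3+\eps}$ rate in the statement and which the authors flag as the reason only a weaker Carathéodory-type convergence is obtained here.
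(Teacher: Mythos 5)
Your overall architecture matches the paper's: the same integral decomposition of $g_t^N-g_t^\infty$ into a local-law error evaluated along $g_s^N(z)$ plus a Lipschitz term for $M_s^\infty$, a time-net argument using H\"older continuity of the DBM to get uniformity in $t$, and Gr\"onwall's inequality to close. However, there is a genuine gap in the random-matrix half of your argument, and you misidentify the mechanism that produces the exponent $1/3$.

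First, the local law you invoke, $|m_t^N(w)-m_t^\infty(w)|=O(N^{-1+\eps}/\Im w)$, is asserted but not obtained by the paper and is not justified by the tools you list. What the resolvent/Ward-identity/quadratic-form machinery actually yields at a fixed spectral parameter is concentration about the mean, $|m_N(z)-\E m_N(z)|=o(t/N^{1-\eps})$. Comparing $\E M_t^N$ with $M_t^\infty$ is a separate step: the paper shows $\E M_t^N(z)$ satisfies the self-consistent equation $\E M_t^N(z)-S^N(z-2t\,\E M_t^N(z))=O(\max(t,t^2)/(N^{1-\eps}\eta^3))$ and then applies a stability estimate for that equation (Proposition \ref{Prop:A:stability}, following O'Rourke--Vu), which converts an $O(\eps)$ perturbation of the self-consistent equation into only an $O(\eps^{1/3})$ perturbation of its solution. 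This cube-root loss is exactly where $N^{-1/3+\eps}$ comes from, and the paper explicitly notes it is the best stability estimate available. If you could genuinely prove the $O(N^{-1+\eps})$ comparison you claim, you would obtain a strictly better rate than the theorem states, which should signal that the step is not free.

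Second, your explanation that the $1/3$ arises from ``optimizing the scale at which the local law is applied against the $\Im^{-2}$ loss,'' and that the degradation is tied to $G$ approaching $K_T$, does not reflect the actual situation. In the theorem $G$ is a fixed compact subset of $\mathbb{H}\setminus K_T$, its image $\tilde G=g_t^N(G)$ remains at distance $\eta>0$ from the real axis uniformly in $t\in[0,T]$ (since $\Im g_t(z)^2\geq \Im(z)^2-4t>0$ on $G$), and all spectral parameters stay at scale $O(1)$. No optimization over $\Im w$ occurs; the Gr\"onwall constant $2/\eta^2$ is simply a fixed constant. The exponent is inherited entirely from the stability of the complex Burgers/self-consistent equation, not from a competition between spectral scale and Lipschitz amplification.
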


\begin{remark}
 It is well-known that for the special values of the parameters $\beta=1$, $\beta=2$ and $\beta=4$, the Dyson Brownian motion particles statistics can be understood using matrices as these values correspond to the well-studied models of the Gaussian Orthogonal Ensemble, Gaussian Unitary Ensemble (GUE), and the Gaussian Symplectic Ensemble (GSE) respectively. An $n \times n$ real symmetric matrix $A$ is drawn from the Gaussian Orthogonal Ensemble (GOE) if the upper-triangular entries $A_{ij}$, $1 \leq i \leq j \leq n$ are independent Guassian random variables, where $A_{ij}$ has mean zero and variance $\frac{1 + \delta_{ij}}{n}$ and $\delta_{ij}$ is the Kronecker delta. The GUE and GSE ensembles are defined similarly with complex and quaternic Gaussian off-diagonal entries. We study the cases $\beta=1$ and $\beta=2$ respectively as they correspond to the critical parameters $\kappa=8$ and $\kappa=4$ in SLE theory.  We expect that a similar analysis will hold for the case $\beta=4$ that corresponds to the value $\kappa=2$.

We note that the $N^{-(1/3-\epsilon)}$ order of convergence to the hydrodynamic limit of multiple SLE is obtained via an estimate in \cite{SeanVu} which is, to the best of our knowledge, the best stability estimate in this setting available in the literature. 

Theorem \ref{thm: Main Result} relies on the following technical result. \begin{theorem}\label{thm:Local law theorem}
    Let $\beta=1$ or $\beta=2$, and let us consider Dyson Brownian motion started from the origin $\left(\lambda_t^{(1)},\dots,\lambda_t^{(N)}\right)$ and $M_t^N:\CC_+\rightarrow \CC_-$ defined by\begin{equation*}
        M_t^N(z)=\frac{1}{N}\sum_{j=1}^N\frac{2}{z-\lambda_t^{(j)}}.
    \end{equation*}  Let $M_t^\infty:\CC_+\rightarrow \CC_-$ be the solution to the complex Burgers equation \begin{equation}\label{eq:Complex Burgers Equation}
         \begin{cases}
                  \frac{\partial M_t^\infty(z)}{\partial t}=-2M_t^\infty(z)\frac{\partial M_t^\infty(z)}{\partial z},\ t>0,\\
                  M_0^\infty(z)=\frac{2}{z}.
           \end{cases}
    \end{equation} Then for any compact set $G\subset \CC_+$, $\eps > 0$, and fixed $t\in [0,T]$ \begin{equation}
        \sup_{z\in G}\left|M_t^N(z)-M_t^\infty(z) \right|=O_{G,\eps}\left( \frac{t}{N^{\frac{1}{3}-\eps}} \right),
    \end{equation} with overwhelming probability. 
\end{theorem}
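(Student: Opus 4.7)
The plan is to identify $M_t^N$ with twice the Stieltjes transform of an empirical spectral measure, identify $M_t^\infty$ with twice the Stieltjes transform of the limiting semicircle, and then import a quantitative local law from random matrix theory. Writing $m_t^N(z) := \frac{1}{N}\sum_j (z-\lambda_t^{(j)})^{-1}$, we have $M_t^N = 2 m_t^N$. Applying the method of characteristics to the complex Burgers equation \eqref{eq:Complex Burgers Equation} with initial datum $M_0^\infty(z) = 2/z$ (i.e., $\mu_0 = \delta_0$) yields an explicit closed form for $M_t^\infty$, which one checks equals $2 m_{\sigma_t}$, where $\sigma_t$ is the rescaled semicircle law arising as the hydrodynamic limit of the empirical measure at time $t$.

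Next, I would invoke the matrix realization available at $\beta=1,2$: for these values, the Dyson Brownian motion started at the origin coincides in law (up to an explicit time rescaling and the standard sign convention for the repulsive drift) with the eigenvalue process of a real symmetric (resp.\ complex Hermitian) matrix-valued Brownian motion. Thus at each fixed $t>0$, the vector $(\lambda_t^{(1)}, \dots, \lambda_t^{(N)})$ is distributed as the eigenvalues of a GOE (resp.\ GUE) ensemble with entry variance proportional to $t/N$, whose limiting spectral measure is exactly $\sigma_t$. Applying the stability estimate for the self-consistent resolvent equation from \cite{SeanVu} then gives, with overwhelming probability, a pointwise bound $|m_t^N(z) - m_{\sigma_t}(z)| = O_{z,\eps}(t\, N^{-1/3+\eps})$ at each fixed $z \in \CC_+$, with the multiplicative $t$ emerging through the $\sqrt{t}$--scaling of the support of $\sigma_t$ as it propagates through the constants in the self-consistent equation.

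To pass from pointwise to uniform control on $G$, I would use a net argument. Since $G\subset \CC_+$ is compact, $\eta_0 := \inf_{z\in G}\Im z > 0$, and both $m_t^N$ and $m_{\sigma_t}$ are holomorphic on $G$ with $z$--derivatives bounded deterministically by $1/\eta_0^2$, uniformly in $N$. Taking a $\delta$--net of $G$ with $\delta = N^{-10}$ (of cardinality $N^{O(1)}$), applying the pointwise bound on the net together with a union bound, and extending by Lipschitz continuity produces the claimed supremum bound, losing only $N^{-10}/\eta_0^2$, which is negligible against $N^{-1/3+\eps}$.

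The principal obstacle is obtaining the pointwise estimate at the stated rate; this is precisely what the \cite{SeanVu} stability result provides and is, as the paper notes, the best currently available in this form—any sharpening would improve the final rate in Theorem \ref{thm: Main Result} directly. A secondary technical point is the bookkeeping of constants under the matrix-model rescaling so that the $t$--dependence emerges cleanly as a linear multiplicative factor, consistently with the fact that $M_t^N$ and $M_t^\infty$ both collapse to $2/z$ as $t \downarrow 0$ and that no spurious $1/t$ singularity is introduced when transferring the fixed-variance Wigner local law to the time-$t$ Gaussian ensemble.
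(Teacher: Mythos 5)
Your overall architecture matches the paper's: realize the DBM at $\beta=1,2$ as the eigenvalues of $-2\sqrt{t}A$ with $A$ GOE/GUE, identify $M_t^\infty$ via the self-consistent (fixed-point) equation for the free convolution, and invoke the stability estimate of O'Rourke--Vu to get the $N^{-1/3+\eps}$ rate. But there is a genuine gap at the step you call the ``principal obstacle'': a stability estimate for the self-consistent equation is not itself a local law. It only says that \emph{if} some $\tilde s_t$ satisfies $\tilde s_t = s_0(z+4t\tilde s_t)+O(\eps)$, \emph{then} $\tilde s_t$ is within $O(\eps^{1/3})$ of the exact solution --- and that cube root is precisely where $N^{-1/3}$ comes from, starting from an $O(N^{-1+\eps})$ error in the equation. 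Your proposal never establishes that $M_t^N$ (or $\E M_t^N$) approximately satisfies that equation with a quantitative error, which is the bulk of the paper's Section 3: (i) concentration $|M_t^N(z)-\E M_t^N(z)| = o(t N^{-1+\eps})$ via a martingale-difference telescoping over conditional expectations, Burkholder's inequality, and quadratic-form concentration; and (ii) the identity $\E M_t^N(z) - S^N\bigl(z-2t\E M_t^N(z)\bigr) = O\bigl(\max(t,t^2)N^{-1+\eps}\eta^{-3}\bigr)$, derived from the resolvent identity, Gaussian integration by parts, and the Ward identity. Without these two inputs the stability proposition has nothing to act on.

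A secondary remark: in the special case of starting positions at the origin, the spectrum at time $t$ is exactly a rescaled GOE/GUE, so one \emph{could} instead cite a standard local semicircle law directly and obtain a pointwise bound of order $N^{-1+\eps}$ at fixed $\eta\asymp 1$, bypassing the self-consistent equation entirely and beating the stated rate; but that route would not explain why the theorem is stated at $N^{-1/3}$ (the rate is an artifact of the stability method, chosen because it extends to general bounded initial data), and it is not what the cited estimate of \cite{SeanVu} provides. Your net argument in $z$ over the compact set $G$ is fine, though strictly unnecessary here since all the error bounds depend on $z$ only through $\eta_0=\inf_{z\in G}\Im z$ and are therefore already uniform on $G$; the nontrivial netting in the paper is over $t\in[0,T]$, which belongs to the proof of the main theorem rather than of this fixed-$t$ statement.
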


\end{remark}

\vspace{2mm}

 The remainder of the paper is organized into several sections. In the second section, we present probabilistic estimates involving the multiple SLE hull and subsets of its complement. The third section focuses on the random matrix techniques we use, as well as on the proof of Theorem 1.3. In subsection 3.4 we utilize a net argument that extends the previously obtained results for a fixed time $t \in [0, T]$, to all times simultaneously. In section 4, we prove Theorem 1.1 and in the Appendix we provide the stability part of the argument. 
 

\vspace{5mm}

\section{Subset of the complement of the multiple SLE hull}

\vspace{5mm}

In this section, we provide probabilistic estimates for general $\beta \geq 1$ that are useful in deducing the choice of the set $G \subset \mathbb{H}\setminus K_T$, where we establish the order of convergence of the family of maps. We present the estimates for general $\beta \geq 1$, and specialize to the $\beta=1$ and $\beta=2$ cases in our application.

Let $\partial_t g_t(z)=\frac{1}{N} \sum_{i=1}^N \frac{2}{g_t(z)-\lambda^i_t}, $ where $(\lambda_t^{(1)}, \cdots, \lambda_t^{(N)})$ is a Dyson Brownian motion (DBM) with parameter $\beta \geq 1.$
We first consider $\lambda^i_t \equiv 0, \forall t \in[0, T]$, for all $i = \{ 1, 2, \cdots, N\}.$ Then, we have that $\partial_t g_t(z)=\frac{2N}{N g_t(z)}=\frac{2}{g_t(z)}.$ Since $g_t(z)=\text{Re}(g_t(z))+i\text{Im}(g_t(z)),$ 
we have that $$\partial_t \text{Im}(g_t(z))=\frac{-2 \text{Im}( g_t(z))}{\left|g_{t}(z)\right|^2} \geq \frac{2}{(\text{Im}(g_t(z))^2}.$$
This allows us to conclude that $$\text{Im}\left(g_t(z)\right)^2 \geqslant\left(\text{Im}(z)\right)^2-4 t>0,$$
whenever $\text{Im}(z)>2\sqrt{T}.$

In order to control the real part, for a Dyson Brownian motion $(\lambda_t^{(1)}, \cdots, \lambda_t^{(N)})$  with parameter $\beta \geq 1$, we observe that 
 $$\partial_t \text{Re}(g_t(z))=\frac{1}{N}\sum_{i=1}^N \frac{\text{Re}\left(g_t(z)\right)-\lambda_t^i}{\left|g_t(z)-\lambda^i_t\right|^2}>0,$$ whenever $\text{Re}(g_t(z))>M=\sup _{t \in[0, T]}\sup_{i=\{1, 2, \ldots, N\}}\left|\lambda_t^i\right|.$
Then, combining the two estimates, we have that $$\{z \in H|:| \text{Re}(z)>M \hspace{1mm} \text{or} \hspace{1mm} \text{Im} > 2\sqrt{T} \} \subset \mathbb{H}\setminus K_T.$$

We also note that for all $t \in [0, T]$, we have

$$K_t \subset\{z \in \overline{\mathbb{H}}:|\operatorname{Re} z| \leq M \hspace{1mm} and \hspace{1mm} \operatorname{Im} z \leq 2 \sqrt{T}\}.$$

Next, we use the following probabilistic result on the behaviour of the extreme eigenvalues.
\begin{lemma}[Lemma 4.3.17 in \cite{Zeit}]\label{Zeitlemma}
Let $\lambda_N^*(t):=\max _{1 \leq i \leq N}\left|\lambda^{(i)}_t\right|=\max \left(\lambda_t^{(N)},-\lambda_t^{(1)}\right).$ Let $\beta \geq 1$. Then there exist finite constants $\alpha=\alpha(\beta)>0, C=C(\beta)$, and for all $t \geq 0$ a random variable $\eta_N^*(t)$ with law independent of $t$, such that
$$
P\left(\eta_N^*(t) \geq x+C\right) \leq e^{-\alpha N x}
$$ and, for all $t \geq 0$,
$$
\lambda_N^*(t) \leq \lambda_N^*(0)+\sqrt{t} \eta_N^*(t).
$$
\end{lemma}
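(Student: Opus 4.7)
The plan is to combine the non-colliding property of $\beta$-DBM for $\beta \geq 1$ with a pathwise comparison of the extreme eigenvalues to their driving Brownian motions. Because particles never cross when $\beta \geq 1$, one can label them so that $\lambda_t^{(1)} < \lambda_t^{(2)} < \cdots < \lambda_t^{(N)}$ persists for all $t \geq 0$, and the labels of the maximal and minimal particle are then globally well-defined processes. Under this ordering every summand in the drift of $\lambda_t^{(N)}$ is non-positive, so integrating \eqref{dbm} will give the pathwise bound
$$\lambda_t^{(N)} \;\leq\; \lambda_0^{(N)} + \frac{2}{\sqrt{N\beta}}\,B_t^{(N)},$$
together with the analogous estimate $-\lambda_t^{(1)} \leq -\lambda_0^{(1)} - \frac{2}{\sqrt{N\beta}}B_t^{(1)}$ for the smallest particle, whose drift is non-negative.

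Taking the maximum and using the elementary inequality $\max(A+a,B+b) \leq \max(A,B) + \max(a^+,b^+)$, I would then arrive at $\lambda_N^*(t) \leq \lambda_N^*(0) + \sqrt{t}\,\eta_N^*(t)$ with
$$\eta_N^*(t) \;:=\; \frac{1}{\sqrt{t}}\max\!\left(\left(\tfrac{2}{\sqrt{N\beta}}B_t^{(N)}\right)^{+},\ \left(-\tfrac{2}{\sqrt{N\beta}}B_t^{(1)}\right)^{+}\right).$$
Brownian scaling shows that the law of $\eta_N^*(t)$ does not depend on $t$, because $B_t^{(i)}/\sqrt{t}$ is standard normal for every fixed $t>0$. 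Since $B^{(1)}$ and $B^{(N)}$ are the independent driving noises of two distinct particles, a union bound yields the Gaussian tail
$$\P\!\left(\eta_N^*(t) > y\right) \;\leq\; 2\,\P\!\left(Z > \tfrac{\sqrt{N\beta}\,y}{2}\right) \;\leq\; 2\,e^{-N\beta y^2/8}$$
for $y>0$ and $Z \sim N(0,1)$. Substituting $y = x+C$ and using $(x+C)^2 \geq 2Cx + C^2$ converts this into $\P(\eta_N^*(t) \geq x+C) \leq e^{-\alpha N x}$ provided $C=C(\beta)$ is chosen large enough so that $2e^{-N\beta C^2/8} \leq 1$; one may then take $\alpha(\beta) := \beta C/4$, giving constants that depend only on $\beta$.

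The only genuinely non-routine input is the non-collision of $\beta$-DBM for $\beta \geq 1$, which is what guarantees that the extreme particles $\lambda_t^{(1)}$ and $\lambda_t^{(N)}$ really are semimartingales driven by a \emph{single} Brownian motion each throughout $[0,T]$. This is classical and can be established, for example, via a McKean-type argument exploiting that the logarithmic repulsion is strong enough in the regime $\beta \geq 1$ to prevent collisions almost surely. Once this fact is in place, everything else is pure stochastic domination plus standard Gaussian concentration, so I do not expect any serious obstacle beyond invoking the non-collision property and carefully tracking the $\beta$-dependence of the constants.
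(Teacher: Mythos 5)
The paper never proves this lemma---it is quoted directly from Anderson--Guionnet--Zeitouni---so the only question is whether your argument stands on its own, and it does not. The fatal step is the claim that every summand in the drift of the top particle is non-positive. That is true only for the interaction term as literally printed in \eqref{dbm}, where the denominator reads $\lambda_t^{(j)}-\lambda_t^{(i)}$; this is a sign typo, and the intended (standard) Dyson dynamics has $\lambda_t^{(i)}-\lambda_t^{(j)}$, i.e.\ the particles \emph{repel}. The rest of the paper forces this reading: the DBM started from the origin is identified with the eigenvalues of $D-2\sqrt{t}A$, which spread out to a semicircle, and an attractive log-gas would not have the non-collision property for $\beta\geq 1$ that you yourself invoke---so your argument is internally inconsistent even before comparing with the intended model. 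For the repulsive dynamics the drift of $\lambda_t^{(N)}$ is non-negative, and your comparison runs in the wrong direction: one gets $\lambda_t^{(N)}\geq \lambda_0^{(N)}+\tfrac{2}{\sqrt{N\beta}}B_t^{(N)}$, not $\leq$.

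A sanity check confirms the conclusion cannot be right: your $\eta_N^*(t)$ is of order $N^{-1/2}$ with overwhelming probability, so started from the origin you would conclude $\lambda_N^*(t)=O(\sqrt{t/N})$, whereas the extreme particle of DBM started at the origin is of order $\sqrt{t}$ (it sits at the edge of the semicircle support, $\approx 4\sqrt{t}$ in this normalization). This is exactly why the lemma carries an order-one constant $C(\beta)$: the collective repulsion drives the extreme particle outward at speed of order $\sqrt{t}$, and the content of the lemma is that it goes no faster except with exponentially small probability. Accordingly the proof cannot be a single-particle SDE comparison. The argument in the cited source couples the given DBM, via monotonicity of strong solutions in the initial data (ordered initial conditions yield ordered solutions when driven by the same Brownian motions), with a DBM started from all particles at $\pm\lambda_N^*(0)$; translation invariance and Brownian scaling write the extreme particle of the dominating process as $\lambda_N^*(0)+\sqrt{t}\,\eta_N^*$ with $\eta_N^*$ distributed as the extreme particle at time $1$ of a DBM started from the origin; and a concentration estimate for the edge of the Gaussian $\beta$-ensemble supplies the tail $e^{-\alpha Nx}$ beyond the deterministic constant $C(\beta)$. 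Your $\max$ manipulations, the scaling argument for $t$-independence of the law, and the Gaussian tail computation are all fine, but they are applied to the wrong dominating process.
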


In the case of the DBM drivers, using Lemma \ref{Zeitlemma}, we have that for $\beta \geq 1$ and for $C=C(\beta)$ and $\alpha=\alpha(\beta)$ some finite constants that $$\mathbb{P}\left( \sup _{t \in[0, T]}\sup_{i=\{1, 2, \ldots, N\}}\left|\lambda_t^i\right| \leq (C+x)\sqrt{T}\right) \geq 1-e^{-\alpha N x}.$$

For conformal maps, we have the following result.

\begin{lemma}[Lemma $4.5$ in \cite{Kemp}] Let $K$ be a hull and $H=\mathbb{H} \backslash K$. If $K \subset B\left(x_0, r\right)$, then $g_K$ maps $H \cap B\left(x_0, 2 r\right)$ into $B\left(x_0, 3 r\right)$ and $$\sup _{z \in H}\left|g_K(z)-z\right| \leq 5 r.$$
\end{lemma}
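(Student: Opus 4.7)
The plan is to reduce by translation and dilation to a normalized hull $\tilde K\subset B(0,1)$, and then analyze $\phi(z):=g_{\tilde K}(z)-z$ via the Schwarz reflection principle together with the hydrodynamic expansion of $g_{\tilde K}$ at infinity. Setting $\tilde K=(K-x_0)/r$ and $\tilde g(w)=(g_K(rw+x_0)-x_0)/r$, a direct computation (using that hydrodynamic capacity scales as $r^2$) shows that $\tilde g$ is the hydrodynamically normalized conformal map for $\tilde K$. The two assertions of the lemma therefore reduce to showing that $\tilde g$ sends $(\mathbb{H}\setminus \tilde K)\cap B(0,2)$ into $B(0,3)$ and that $\sup_{z\in\mathbb{H}\setminus\tilde K}|\tilde g(z)-z|\leq 5$.

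For the analysis, the hydrodynamic normalization gives $\tilde g(z)=z+\operatorname{hcap}(\tilde K)/z+O(|z|^{-2})$ near infinity, so $\phi(z)\to 0$ as $|z|\to\infty$, and reality of $\tilde g$ on $\mathbb{R}\setminus\tilde K$ allows Schwarz reflection to extend $\phi$ holomorphically to $\mathbb{C}\setminus \tilde K^*$, where $\tilde K^*=\tilde K\cup\overline{\tilde K}\subset B(0,1)$. The maximum principle applied on the exterior of $\overline{B(0,1)}$ then reduces the bound on $|\phi|$ for $|z|\geq 1$ to a bound on the unit circle, while a harmonic-measure or boundary-behavior argument, combined with the estimate $\operatorname{hcap}(\tilde K)\leq 1$ (which follows from $\tilde K\subset B(0,1)$), provides the quantitative control of $|\phi|$ on the circle $\partial B(0,1)$ itself.

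These ingredients together yield $\sup_H|\phi|\leq 5$, which is the second claim after undoing the normalization. For the first claim, the same exterior maximum principle propagates the bound from $\partial B(0,1)$ to a sharper $1/|z|$-type decay, giving $|\phi(z)|\leq 1$ on $\partial B(0,2)\cap H$ and hence $|\tilde g(z)|\leq 3$ there. Because $\tilde g$ is univalent and extends continuously to the boundary away from $\tilde K^*$, a standard boundary-image argument (the image of the bounded region $B(0,2)\cap H$ under the univalent $\tilde g$ is enclosed by the image of its boundary, together with $\tilde g(\partial B(0,2)\cap H)\subset B(0,3)$ and the boundary arcs along $\mathbb{R}$ and $\tilde K$ mapping within $B(0,3)$) then forces the entire image into $B(0,3)$.

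The principal obstacle I anticipate is teasing out the sharp numerical constants $3$ and $5$ rather than unspecified $O(r)$ bounds. This typically requires a careful choice of test function in the maximum principle step, or a direct comparison to an explicit extremal hull (such as the vertical slit or the upper half-disk of radius $r$) for which $g_K$ is known in closed form and the bounds can be verified to be saturated.
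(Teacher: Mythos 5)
The paper does not prove this lemma; it is quoted verbatim from Kemppainen's book, so the only meaningful comparison is with the standard proof there (and in Lawler's book). Your normalization step is fine: translating and rescaling to $\tilde K\subset B(0,1)$ and checking that hcap scales like $r^2$ is exactly how one reduces to unit scale. The gap is in everything after that. Your strategy is to control $\phi=g_{\tilde K}-\mathrm{id}$ by the maximum principle on $\mathbb{C}\setminus\tilde K^*$, but the maximum principle then requires a bound on the boundary values of $\phi$ on $\partial\tilde K^*$ (the exterior-of-$B(0,1)$ version only covers $|z|\ge 1$ and says nothing about points of $H$ inside the unit ball, which the supremum over all of $H$ and the first claim about $H\cap B(0,2)$ both require). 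Controlling $g_{\tilde K}$ near $\partial\tilde K$ is precisely the content of the lemma --- $g_{\tilde K}$ need not even extend continuously to $\partial\tilde K$, and the size of the real interval $g_{\tilde K}(\partial\tilde K)$ is exactly what you are trying to bound --- so the step you defer to ``a harmonic-measure or boundary-behavior argument'' is not a technicality but the entire quantitative heart of the proof, and you acknowledge in your last paragraph that you have not extracted the constants $3$ and $5$.

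The standard way to close this gap is to work with the inverse map $f=g_K^{-1}$ rather than with $g_K$: since $g_K:H\to\mathbb{H}$ is a bijection, $\sup_{z\in H}|g_K(z)-z|=\sup_{w\in\mathbb{H}}|f(w)-w|$, and $h(w)=f(w)-w$ is holomorphic on all of $\mathbb{H}$ with $\Im h\ge 0$, vanishes at infinity, and is real-analytic across $\mathbb{R}$ off a compact interval. The Nevanlinna--Herglotz representation then gives $h(w)=\int \mu(dx)/(x-w)$ for a positive measure $\mu$ with total mass $\mathrm{hcap}(K)\le r^2$ supported on an interval whose location and length are controlled by first applying the estimate $|h(w)|\le \mu(\mathbb{R})/\mathrm{dist}(w,\mathrm{supp}\,\mu)$ far from $K$; combining these facts yields the explicit constants $3$ and $5$ without ever needing boundary control of $g_K$ on $\partial K$. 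As written, your outline identifies the correct reduction and the correct toolkit but does not constitute a proof of the stated bounds.
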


For a box $G \subset H_T=\mathbb{H}\setminus K_T$, we have that with overwhelming probability that

\begin{equation}\label{eq:A:set containment}
g^N_t(G) \subset \{z: \sqrt{\text{Im}(z_0))^2-4 t} \leq \text{Im}(z) \leq  \text{Im}(z_0); \hspace{2mm} |Re(z)| \leq f(N, T)\},
\end{equation}
where $f(N,T)$ can be deduced from the following:

\begin{equation}
|\text{Re}g_K(z)| \leq |g_K(z)| \leq |z|+5r.
\end{equation}
In the case of the multiple SLE hull $K_T$, we have $r=\sqrt{M^2+(2\sqrt{T})^2}$.

\section{Random Matrix Techniques} In this section we prove some random matrix results leading to the proof of Theorem \ref{thm:Local law theorem}. It is worth noting that for $\beta=1$ and $\beta=2$, DBM $\left(\lambda_t^{(1)},\dots,\lambda_t^{(N)}\right)$ defined as the solution to \eqref{dbm} starting from initial positions $\left( \lambda_0^{(1)},\dots, \lambda_0^{(N)} \right)$ is equal in distribution to the eigenvalues of $D-2\sqrt{t}A$ where $D$ is an $N\times N$ diagonal matrix of the initial positions and $A$ is a matrix drawn from the Gaussian Orthogonal Ensemble for $\beta=1$ or Gaussian Unitary Ensemble (GUE) for $\beta=2$. We establish the results in this section for the case when $A$ is drawn from the GOE, since the adjustments to the GUE model are straightforward. 

\subsection{Tools}

    	This section introduces the tools we will use throughout.  We begin with a definition describing high probability events.  
	
	\begin{definition}[High probability events] \label{def:events}
		Let $E$ be an event that depends on $n$.
		\begin{itemize}
			\item $E$ holds \emph{asymptotically almost surely} if $\Prob(E) = 1 - o(1)$.
			\item $E$ holds \emph{with high probability} if $\Prob(E) = 1 - O(n^{-c})$ for some constant $c > 0$.
			\item $E$ holds \emph{with overwhelming probability} if, for every $p > 0$,  $\Prob(E) \geq 1 - O_p(n^{-p})$.  
		\end{itemize}
	\end{definition}
	
	For $z = E + i \eta \in \mathbb{C}_+$, $n\times n$ Hermitian matrix $H$, and $G(z):=\left(H-zI\right)^{-1}$ the \emph{Ward identity} states that
	\begin{equation} \label{eq:ward}
		\sum_{j = 1}^n \left| G_{ij}(z) \right|^2 = \frac{1}{ \eta} \Im G_{ii}(z). 
	\end{equation} 
	
	If $A$ and $B$ are invertible matrices, the \emph{resolvent identity} states that
	\begin{equation} \label{eq:resolvent}
		A^{-1} - B^{-1} = A^{-1} (B - A) B^{-1} = B^{-1} (B - A) A^{-1}. 
	\end{equation} 
	
	If $\xi$ is a Gaussian random variable with mean zero and variance $\sigma^2$ and $f: \mathbb{R} \to \mathbb{C}$ is continuously differentiable, the \emph{Gaussian integration by parts formula} states that
	\begin{equation} \label{eq:ibp}
		\E[ \xi f(\xi)] = \sigma^2 \E[ f'(\xi) ], 
	\end{equation}
	provided the expectations are finite.  
	The next lemma is a convenient moment bound for a martingale difference sequence. 
	\begin{lemma} [Lemma 2.12 from \cite{kyle8}] \label{klem:burkholder2}
		Let $\{X_k\}$ be a complex martingale difference sequence and $\mathcal{F}_k = \sigma(X_1, \dots, X_k)$ be the $\sigma$-algebra generated by $X_1, \dots, X_k$.  Then, for any $p \geq 2$, 
		\[
		\E  \left|\sum_{k=1}^n  X_k \right|^p \leq C_p \left(\E\left( \sum_{k=1}^n \E_{k-1} |X_k|^2 \right)^{p/2} + \E \sum_{k=1}^n |X_k|^p \right). 
		\]
		where $C_{p}$ is a constant that only depends on $p$ and $\E_{k-1}[\cdot] := \E[\cdot | \mathcal{F}_{k-1}]$.  
	\end{lemma}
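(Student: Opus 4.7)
The plan is to derive this Burkholder-Rosenthal type moment inequality from the Burkholder-Davis-Gundy (BDG) inequality combined with a decomposition of the quadratic variation into its predictable compensator plus a martingale remainder. First I reduce to real-valued differences by writing $X_k = U_k + i V_k$; both $\{U_k\}$ and $\{V_k\}$ are real martingale difference sequences with respect to $\mathcal{F}_k$, and $|U_k|,|V_k| \leq |X_k|$, so any inequality of the stated form for real differences implies the complex version up to a multiplicative factor depending only on $p$.

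Assuming the $X_k$ are real, set $S_n := \sum_k X_k$ and $[S]_n := \sum_k X_k^2$. BDG yields $\E|S_n|^p \leq C_p\,\E [S]_n^{p/2}$ for $p \geq 2$. Decomposing
\[
[S]_n = \langle S \rangle_n + T_n, \qquad \langle S \rangle_n := \sum_k \E_{k-1}[X_k^2], \qquad T_n := \sum_k \bigl( X_k^2 - \E_{k-1}[X_k^2] \bigr),
\]
and applying $(a+b)^{p/2} \leq 2^{p/2-1}\bigl(a^{p/2}+|b|^{p/2}\bigr)$ isolates the predictable piece $\E\langle S \rangle_n^{p/2}$, which is already the first term appearing in the claim. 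What remains is the bound $\E|T_n|^{p/2} \leq C_p \sum_k \E|X_k|^p$.

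For $2 \leq p \leq 4$, a second application of BDG to the martingale $T_n$ combined with the subadditivity $(x+y)^{p/4} \leq x^{p/4} + y^{p/4}$ (valid since $p/4 \leq 1$) gives $\E|T_n|^{p/2} \leq C_p \sum_k \E|Y_k|^{p/2}$ where $Y_k := X_k^2 - \E_{k-1}[X_k^2]$. The pointwise estimate $|Y_k| \leq X_k^2 + \E_{k-1}[X_k^2]$ with Jensen's inequality $(\E_{k-1}[X_k^2])^{p/2} \leq \E_{k-1}|X_k|^p$ then produces $\E|Y_k|^{p/2} \leq C_p \E|X_k|^p$, completing this case. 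For $p > 4$ a self-bounding argument is required: BDG applied to $T_n$ gives $\E|T_n|^{p/2} \leq C_p\, \E\bigl( \sum_k Y_k^2 \bigr)^{p/4}$, and the estimate $Y_k^2 \leq 2 X_k^4 + 2(\E_{k-1}[X_k^2])^2$ splits this quantity into two pieces. The predictable contribution is controlled by $\sum_k (\E_{k-1}[X_k^2])^2 \leq \bigl(\sum_k \E_{k-1}[X_k^2]\bigr)^2 = \langle S \rangle_n^2$, while the jump contribution $\sum_k X_k^4 \leq (\max_k X_k^2)\,[S]_n$ is handled via Cauchy-Schwarz and $\E \max_k X_k^p \leq \sum_k \E|X_k|^p$, producing a quadratic inequality in $B := \E[S]_n^{p/2}$ of the form $B \leq C_p\, \E\langle S \rangle_n^{p/2} + C_p\,\bigl(\sum_k \E|X_k|^p\bigr)^{1/2} B^{1/2}$; solving this quadratic yields the desired bound. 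The main bookkeeping hurdle is this $p > 4$ self-bounding step, in particular verifying that the resulting constants depend only on $p$; the rest is standard discrete martingale calculus and elementary convexity.
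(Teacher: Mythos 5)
The paper does not prove this lemma at all; it is imported verbatim as Lemma 2.12 of the cited reference, where it appears as the classical Burkholder--Rosenthal inequality. Your argument is the standard proof of that inequality and is essentially correct: the reduction to real differences, the BDG step $\E|S_n|^p \le C_p \E [S]_n^{p/2}$, the splitting $[S]_n = \langle S\rangle_n + T_n$, the direct treatment of $2\le p\le 4$ via subadditivity of $x\mapsto x^{p/4}$ and Jensen in the form $(\E_{k-1}|X_k|^2)^{p/2}\le \E_{k-1}|X_k|^p$, and the self-bounding quadratic in $B=\E[S]_n^{p/2}$ for $p>4$ all go through. Two small points to make explicit when you write it up: (i) in the $p>4$ step you must know $B<\infty$ before solving the inequality $B\le a+bB^{1/2}$; this is harmless here because you may assume $\sum_k\E|X_k|^p<\infty$ (else the bound is vacuous), and then $\E[S]_n^{p/2}\le n^{p/2-1}\sum_k\E|X_k|^p<\infty$ by convexity, since there are finitely many terms; (ii) at the endpoint $q=p/2=1$ the BDG inequality controls $\E\sup_k|T_k|$, which dominates $\E|T_n|$, so that case is also covered. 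With those remarks the proof is complete and the constants visibly depend only on $p$.
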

	The next concentration lemma is helpful in controlling the deviation of a quadratic form from its expectation. 
	\begin{lemma} [Equation (3) from \cite{kyle2}] \label{klem:quadraticform}
		Let $X$ be an $n$-vector containing iid standard Gaussian random variables, $A$ a deterministic $n \times n$ matrix and $\ell \geq 1$ an integer.  Then
		\[
		\E[X^* A X - \tr A|^{2 \ell} \leq C_\ell (\tr A A^* )^\ell
		\]
		where $C_\ell$ is a constant that only depends on $\ell$.  
	\end{lemma}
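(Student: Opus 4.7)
The plan is to diagonalize and then invoke the martingale moment bound (Lemma \ref{klem:burkholder2}) stated just above. As a preliminary reduction, since $X$ is a real vector, the scalar quadratic form satisfies $X^*AX = X^T A X = X^T A_s X$ with $A_s := (A + A^T)/2$, and $\tr A = \tr A_s$; moreover the triangle inequality in Frobenius norm gives $\|A_s\|_F \leq \|A\|_F$, i.e., $\tr(A_s A_s^*) \leq \tr(A A^*)$. Writing $A_s = B + iC$ with $B, C$ real symmetric, a short computation yields $\tr(A_s A_s^*) = \tr B^2 + \tr C^2$. Splitting $X^T A_s X - \tr A_s = (X^T B X - \tr B) + i(X^T C X - \tr C)$ and applying the Minkowski inequality in $L^{2\ell}$ therefore reduces the problem to proving
\[
\E\,|X^T B X - \tr B|^{2\ell} \leq C_\ell (\tr B^2)^\ell
\]
for any real symmetric matrix $B$, and symmetrically for $C$.

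In that real symmetric case, orthogonal invariance of the iid standard Gaussian vector $X$ lets me diagonalize $B = O \Lambda O^T$ with $\Lambda = \mathrm{diag}(\lambda_1,\dots,\lambda_n)$ and substitute $Y := O^T X$, which is again an iid standard Gaussian vector. The quadratic form then becomes a sum of independent mean-zero random variables,
\[
X^T B X - \tr B \;=\; \sum_{k=1}^n \lambda_k \bigl(Y_k^2 - 1\bigr),
\]
to which I would apply Lemma \ref{klem:burkholder2} with $\xi_k := \lambda_k (Y_k^2 - 1)$ (a trivial martingale difference sequence) at exponent $p = 2\ell \geq 2$. Using $\E(Y_k^2 - 1)^2 = 2$ and the universal bound $m_\ell := \E|Y_k^2 - 1|^{2\ell} < \infty$, the lemma gives
\[
\E\Big|\sum_k \xi_k \Big|^{2\ell} \leq C_\ell \left( \Big(2 \sum_k \lambda_k^2\Big)^{\ell} + m_\ell \sum_k \lambda_k^{2\ell} \right),
\]
and monotonicity of $\ell^p$ norms (valid for $\ell \geq 1$) gives $\sum_k \lambda_k^{2\ell} \leq (\sum_k \lambda_k^2)^\ell = (\tr B^2)^\ell$. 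Both terms on the right are therefore bounded by a constant depending only on $\ell$ times $(\tr B^2)^\ell$, which together with the preliminary reduction yields the claimed bound.

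There is no major obstacle here: the essence is that a centered Gaussian quadratic form is a linear combination of independent $\chi^2_1 - 1$ variables with coefficients equal to the eigenvalues of the symmetric part, and the previously stated martingale moment inequality then delivers a Frobenius-norm-type bound of the desired order. The only care required is in the initial complex-to-real splitting, so that the right-hand side is recovered in terms of $\tr A A^*$ rather than $\tr A_s A_s^*$, and in absorbing the constants $2$ and $m_\ell$ coming from the first two moments of $\chi^2_1 - 1$ into the final constant $C_\ell$.
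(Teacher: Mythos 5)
Your proof is correct. Note that the paper does not prove this lemma at all — it is imported verbatim as ``Equation (3)'' from the cited reference, so there is no internal argument to compare against. Your route is a clean, self-contained derivation: the reduction $X^*AX = X^T A_s X$ with $\tr(A_sA_s^*) \leq \tr(AA^*)$, the real/imaginary split, and the diagonalization $X^TBX - \tr B = \sum_k \lambda_k(Y_k^2-1)$ via orthogonal invariance are all valid, and the application of Lemma \ref{klem:burkholder2} (independent mean-zero summands are trivially a martingale difference sequence) together with $\sum_k \lambda_k^{2\ell} \leq (\sum_k \lambda_k^2)^\ell$ closes the bound. The one structural difference worth flagging is that your argument leans essentially on Gaussianity through the rotation invariance of $X$; the standard form of this inequality in the random matrix literature (e.g.\ Bai--Silverstein's Lemma B.26, which is what the cited source echoes) is proved by a direct combinatorial moment expansion and holds for general iid entries with finite moments of the appropriate order. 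For the present paper's purposes, where the vectors are rows of GOE/GUE matrices, your Gaussian-specific proof is fully sufficient and arguably simpler.
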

	Finally, we will require the following algebraic identity in Section \ref{Concentration of Stieltjes transform}.
	\begin{lemma} [Theorem A.5 from \cite{kyle8}] \label{klem:tracedifference}
		Let $A$ be an $n \times n$ symmetric matrix and $A_k$ be the $k$-th major submatrix of size $(n-1) \times (n-1)$.  If $A$ and $A_k$ are both invertible, then
		\[
		\tr( A^{-1}) - \tr(A_k^{-1}) = \frac{1+ \alpha_k^* A_k^{-2} \alpha_k}{A_{kk} - \alpha_k^* A_k^{-1} \alpha_k}
		\] 
		where $\alpha_k$ is obtained from the $k$-th column of $A$ by deleting the $k$-th entry.  
	\end{lemma}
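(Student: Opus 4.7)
The plan is to prove the identity by a single explicit application of block-matrix inversion. First, note that if $P$ is the permutation matrix swapping indices $k$ and $n$, then $P^T A P$ is symmetric with $\tr((P^T A P)^{-1}) = \tr(A^{-1})$, its $(n,n)$ entry equals $A_{kk}$, its truncated last column equals $\alpha_k$ (up to reordering), and its leading $(n-1)\times(n-1)$ principal submatrix is a permutation conjugate of $A_k$ and hence has the same trace of inverse. So it suffices to treat $k=n$, in which case I can partition
\[
A = \begin{pmatrix} A_n & \alpha_n \\ \alpha_n^* & A_{nn} \end{pmatrix},
\]
with $\alpha_n \in \mathbb{R}^{n-1}$ and $A_{nn} \in \mathbb{R}$.

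Setting $s := A_{nn} - \alpha_n^* A_n^{-1} \alpha_n$, the standard block inversion formula gives
\[
A^{-1} = \begin{pmatrix} A_n^{-1} + s^{-1}\, A_n^{-1}\alpha_n \alpha_n^* A_n^{-1} & -s^{-1}\, A_n^{-1}\alpha_n \\ -s^{-1}\, \alpha_n^* A_n^{-1} & s^{-1} \end{pmatrix}.
\]
Taking the trace and subtracting $\tr(A_n^{-1})$, only the two diagonal blocks contribute, so
\[
\tr(A^{-1}) - \tr(A_n^{-1}) = s^{-1}\,\tr\!\bigl( A_n^{-1} \alpha_n \alpha_n^* A_n^{-1} \bigr) + s^{-1}.
\]
By the cyclic property of the trace, $\tr(A_n^{-1}\alpha_n \alpha_n^* A_n^{-1}) = \alpha_n^* A_n^{-2} \alpha_n$. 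Combining the two summands over the common denominator $s$ produces exactly $(1 + \alpha_n^* A_n^{-2} \alpha_n)/(A_{nn} - \alpha_n^* A_n^{-1} \alpha_n)$, which is the claimed identity.

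The only point requiring real care is to justify that the Schur complement $s$ is nonzero, since otherwise the block inverse above is not defined. This is immediate from the Schur determinantal identity $\det A = s \cdot \det A_n$: under the hypothesis that both $A$ and $A_n$ are invertible, neither side can vanish, forcing $s \neq 0$. There is no genuine obstacle beyond this; the block-inversion formula itself can be verified by simply multiplying the proposed expression for $A^{-1}$ against the block form of $A$ and checking that each of the four resulting blocks equals the corresponding block of the identity, which reduces to canceling the $s^{-1}$ factors against the definition of $s$.
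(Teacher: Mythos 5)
Your proof is correct and complete: the reduction to $k=n$ by permutation conjugation, the Schur-complement block inversion, the cyclic-trace identity, and the determinantal argument that $s\neq 0$ all check out. The paper itself states this lemma as a citation without proof, and your argument is essentially the standard one given in the cited reference, so there is nothing further to reconcile.
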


	\subsection{Concentration of the Gaussian Orthogonal Ensemble}\label{Concentration of Stieltjes transform}

	In this section we show that $|M_{t}^N(z) - \E M_{t}^N(z)|$ is small for a fixed $z \in \C_+$. To match the random matrix literature we will consider for fixed $t>0$, $m_N(z):=-\frac{1}{2}M_t^N(z)$. We let $A_t$ be $\sqrt{t} A$ where $A$ is drawn from the Gaussian Orthogonal Ensemble.

 We note that $m_N(z) - \E m_N(z)$ can be written as the following telescopic sum
	\[
	m_N(z) - \E m_N(z) = \sum_{k=1}^n \left( \E_{k} m_N(z) - \E_{k-1} m_N(z) \right) := \sum_{k=1}^N \gamma_k
	\]
	
	Observe that
	\[
	m_N(z) = \frac{1}{N} \tr (A_t - z)^{-1} = \frac{1}{N} \tr \frac{1}{\sqrt{t} A - z} =\frac{1}{N \sqrt{t}} \tr \frac{1}{ A - z/\sqrt{t}} = \frac{1}{N \sqrt{t}} \tr \frac{1}{ A - z'}
	\]
	
	We define $E' = E/\sqrt{t}$ and $\eta' = \eta/\sqrt{t}$.  Let $\E_k$ denote the conditional expectation with respect to the $\sigma$-field generated by $A_{ij}$ with $i,j \leq k$, so that $\E_N m_N(z) = m_N(z)$ and $\E_0 m_N(z) = \E m_N(z)$. 
	
	\begin{align*}
		\gamma_k &= \frac{1}{N \sqrt{t}}(\E_{k} \tr ( A -z')^{-1} - \E_{k-1} \tr (A -z')^{-1} )  \\
		&= \frac{1}{N \sqrt{t} } \Big(\E_{k} \big[ \tr ( A -z')^{-1} - ( A_k - z')^{-1} \big] - \E_{k-1} \big[ \tr ( A-z')^{-1} - \tr ( A_k - z')^{-1} \big] \Big) \\
		&= \frac{1}{N \sqrt{t}} (\E_{k} - \E_{k-1}) \Bigg(\frac{a_k^* G_k^{2} a_k - \E_{a_k} a_k^* G_k^{2} a_k}{A_{kk} - z' - a_k^* G_k a_k} + \frac{1 + \E_{a_k} a_k^* G_k^{2} a_k }{A_{kk} - z' - a_k^* G_k a_k} \\
		&\hspace{7cm}- \frac{1 + \E_{a_k} a_k^* G_k^{2} a_k}{A_{kk} - z' - \E_{a_k} a_k^* G_k a_k} \Bigg) \\
		&= \frac{1}{N \sqrt{t}} (\E_{k} - \E_{k-1}) \Bigg(\frac{a_k^* G_k^{2} a_k - \E_{a_k} a_k^* G_k^{2} a_k}{A_{kk} - z' - a_k^* G_k a_k} \\
		&\hspace{4cm} - \frac{(1 + \E_{a_k}a_k^* G_k^{2} a_k) (a_k^* G_k a_k - \E_{a_k} a_k^* G_k a_k)}{(A_{kk} - z' -  a_k^* G_k a_k)(A_{kk} - z' - \E_{a_k} a_k^* G_k a_k)}\Bigg) \\
		&= \frac{1}{N \sqrt{t}} (\E_{k} - \E_{k-1}) \Bigg(\frac{a_k^* G_k^{2} a_k - \frac{1}{N} \tr G_k^{2} }{A_{kk} - z' - a_k^* G_k a_k} \\
		&\hspace{4cm} - \frac{(1 + \frac{1}{N} \tr G_k^{2} ) (a_k^* G_k a_k - \frac{1}{N}\tr G_k )}{(A_{kk} - z' -  a_k^* G_k a_k)(A_{kk} - z' - \frac{1}{N} \tr G_k )}\Bigg) \\
	\end{align*}
where $a_k$ denotes the $k$-th row of $A$ with the $k$-th entry removed.  
We define the following quantities,
\[
\alpha_k = a_k^* G_k^{2} a_k - \frac{1}{N} \tr G_k^{2}, 
\]
\[
\beta_k = \frac{1}{ A_{kk} - z' -  a_k^* G_k a_k}, \quad \bar{\beta}_k = \frac{1}{A_{kk} - z' - \frac{1}{N} \tr G_k}, 
\]
\[
\delta_k = a_k^* G_k a_k - \frac{1}{N}\tr G_k, \quad 
\epsilon_k = 1 + \frac{1}{N} \tr G_k^{2} ,
\]
so that
\begin{align} \label{eq:concentration}
	m_N(z) - \E m_N(z) &= \frac{1}{N \sqrt{t} } \sum_{k=1}^N (\E_{k} - \E_{k-1}) \alpha_k \beta_k  - \frac{1}{N \sqrt{t}} \sum_{k=1}^N (\E_{k} - \E_{k-1}) \epsilon_k \delta_k \beta_k \bar{\beta}_k \nonumber \\
	&:=  \frac{1}{\sqrt{t}} S_1 - \frac{1}{\sqrt{t}} S_2.
\end{align}

For a fixed $\varepsilon > 0$, we will show that $N^{1-\varepsilon} (\eta')^3  |S_1| = o(1)$ and $N^{1 - \varepsilon} (\eta')^3  |S_2| = o(1)$  with overwhelming probability.  This will be done via the method of moments.  We begin with $S_1$.  By Markov's inequality, it suffices to bound $\E|  N^{1- \varepsilon} (\eta')^3 S_1|^{2 \ell} = \E | N^{-\varepsilon} (\eta')^3\sum_{k =1}^n (\E_{k} - \E_{k-1}) \alpha_k \beta_k|^{2 \ell}$ for $\ell \in \N$.  

By Lemma \ref{klem:burkholder2}, for any $\ell \geq 1$,  
\begin{align*}
\E |N^{-\varepsilon} (\eta')^3 \sum_{k =1}^N (\E_{k} - \E_{k-1}) \alpha_k \beta_k|^{2 \ell} &\leq C_\ell \Bigg( \E \left(\sum_{k=1}^N \E_{k-1} |N^{-\varepsilon} (\eta')^3 \alpha_k \beta_k|^2 \right)^{\ell} \\
&\quad \quad + \sum_{k=1}^N \E  |N^{-\varepsilon} (\eta')^3 \alpha_k \beta_k|^{2 \ell} \Bigg).
\end{align*}
We use $C_\ell$ to indicate a constant that only depends on $\ell$, but may change from line to line.  Since $\Im a^*_k G_k a_k > 0$,
\[
|\beta_k| \leq (\eta')^{-1}.
\]  
Therefore,
\begin{align} \label{keq:S1}
	\E \left|N^{- \varepsilon}(\eta')^3 \sum_{k=1}^n (\E_{k} - \E_{k-1}) \alpha_k \beta_k \right|^{2 \ell} &\leq C_\ell N^{-2 \varepsilon \ell} \Bigg( \E \left(\sum_{k=1}^N \E_{k-1} |(\eta')^2 \alpha_k |^2 \right)^{\ell} \nonumber \\
	&\qquad \qquad + \sum_{k=1}^N \E  | (\eta')^2 \alpha_k|^{2 \ell} \Bigg).
\end{align}
By Lemma \ref{klem:quadraticform}, 
\[
\E|(\eta')^2 \alpha_k|^{2 \ell} \leq C_\ell (\eta')^{4 \ell} N^{-2 \ell}  \E|\tr G_k^{2} G_k^{*2}|^\ell.
\]
We use the simple bound that 
\begin{align} \label{keq:indicator}
	\tr G_k^2 G_k^{*2} &=  \left(\sum_{i=1}^N \frac{1}{((\lambda_i - E)^2 + (\eta')^2)^2} \right) \nonumber \\
	&\leq N (\eta')^{-4}
\end{align}

We now have that
\begin{align*}
	\E|(\eta')^2 \alpha_k|^{2 \ell} &\leq C_\ell (\eta')^{4 \ell} N^{-2 \ell} \E | N (\eta')^{-4}|^\ell \\
	&\leq C_\ell N^{- \ell}  
\end{align*}

Therefore, by equation \eqref{keq:S1},
\begin{equation*}
	\E \left|N^{-\varepsilon} (\eta')^3 \sum_{k=1}^N (\E_{k} - \E_{k-1}) \alpha_k \beta_k \right|^{2 \ell} \leq  C_\ell N^{-2 \varepsilon \ell} \left( \E \left(\sum_{k=1}^N \E_{k} |(\eta')^2 \alpha_k|^2 \right)^{\ell} + N^{-\ell+1} \right) 
\end{equation*}

By the same reasoning as in \eqref{keq:indicator}, we also have that $\E_k |\alpha_k|^2 \leq N (\eta')^{-4}$.  Thus,
\[
\E_{k} |(\eta')^2 \alpha_k|^2  \leq K N^{-1} 
\]
so
\[
 \E \left(\sum_{k=1}^N \E_{k} |(\eta')^2 \alpha_k|^2 \right)^{\ell} \leq C_\ell.
\]
Finally, we can conclude that 
\[
\E \left|N^{-\varepsilon} (\eta')^3 \sum_{k=1}^N (\E_{k-1} - \E_k) \alpha_k \beta_k \right|^{2 \ell} \leq C_\ell N^{-2 \varepsilon \ell}.
\]
As $\ell$ is arbitrary, we have shown that $|S_1| = o_{\eta} (t^{3/2}/N^{1 - \varepsilon})$ with overwhelming probability.  

Now we address $S_2$.  We first observe that
\begin{align*}
\left|1 + \frac{1}{N} \tr G_k^2\right| &\leq 1 + \frac{1}{N} \tr G_k G_k^* \\
&= (\eta')^{-1} \Im\left(-A_{kk} + z' + \frac{1}{N} \tr G_k \right)
\end{align*}
Therefore,
\begin{align*}
	|\epsilon_k \bar{\beta}_k| = \frac{|1 + \frac{1}{N} \tr G_k^2|}{|A_{kk} - z' - \frac{1}{N} \tr G_k|} \leq (\eta')^{-1}
\end{align*}
Recalling that $|\beta_k| \leq (\eta')^{-1}$, we have that
\[
\E |N^{1 - \varepsilon} (\eta')^4 S_2|^{2 \ell} = N^{-2 \varepsilon \ell} (\eta')^{2 \ell} \left|  \sum_{k=1}^N (\E_{k} - \E_{k-1})  \delta_k \right|^{2 \ell}
\]
Again, by Lemma \ref{klem:burkholder2}
\begin{align*}
	\E |N^{1 - \varepsilon} (\eta')^4 S_2|^{2 \ell} &\leq C_\ell N^{-2 \varepsilon \ell} (\eta')^{2\ell} \left( \E  \left( \sum_{k=1}^N \E_{k-1} |\delta_k|^2\right)^{\ell} + \sum_{K=1}^N \E|\delta_k|^{2 \ell}\right).
\end{align*}	
Note that by Lemma \ref{klem:quadraticform},
\[
\E |\delta_k|^{2 \ell} \leq C_\ell N^{-2 \ell} \E|\tr G_k G_k^*|^{\ell}.
\]
We have that
\[
\tr G_k G_k^* \leq N (\eta')^{-2}
\]
so 
\[
\E |\delta_k|^{2 \ell} \leq C_\ell N^{-\ell} (\eta')^{-2 \ell}. 
\]
Additionally,
\[
\E_{k-1} |\delta_k|^2 \leq N^{-1} (\eta')^{-2}.
\]
Thus,
\[
	\E |N^{1 - \varepsilon} (\eta')^4 S_2|^{2 \ell} \leq C_\ell N^{-2 \varepsilon \ell}.
\]
We can then conclude that $S_2$ is $o_\eta (t^2/N^{1- \varepsilon})$ with overwhelming probability. Returning to $\eqref{eq:concentration}$ we have shown that 
\begin{equation}\label{eq:A:Concentration Conclusion}
  |m_N(z) - \E m_N(z)| = o\left( \frac{t}{N^{1 - \varepsilon}}\right)  
\end{equation}
with overwhelming probability.


\subsection{Proof of Theorem \ref{thm:Local law theorem}} 

In this section we provide the proof of Theorem \ref{thm:Local law theorem}. We will give begin the proof for generic initial starting positions of the Dyson Brownian motion, before specializing to the starting positions at the origin. Define the matrix \begin{equation}\label{eq:A:Matrix model}
    L_t=D-2\sqrt{t}A
\end{equation} where $A$ is drawn from the Gaussian Orthogonal/Unitary Ensemble and $D$ is an $N\times N$ deterministic diagonal matrix. Define the resolvent matrices  \begin{equation*}
    G_t(z):=\left( L_t-zI \right)^{-1}, \quad \text{and} \quad Q(z):=\left( D-zI \right)^{-1}.
\end{equation*} Next, we define the functions \begin{equation*}
    M_t^N(z)=-\frac{2}{N}\tr G_t(z), \quad \text{and} \quad S^N(z)=-\frac{2}{N}\tr Q(z).
\end{equation*} Fix $t,\eta>0$ and $z$ such that $\Im(z)\geq \eta$. Additionally, define the matrices \begin{equation*}
    G:= G_t(z),
\end{equation*} and \begin{equation*}
    Q:=Q\left(z-2t\EE M_t^N(z)\right).
\end{equation*} In particular $S^N\left(z-2t\EE M_t^N(z)\right)=-\frac{2}{N}\tr Q$. By the resolvent identity \eqref{eq:resolvent} \begin{align}\label{eq:A:Resolvent identity step}
    \EE M_t^N(z) - S^N\left(z-2t\EE M_t^N(z)\right) &= -\frac{2}{N}\left(\tr G_t -\tr Q_t \right) \\
                            &= -2\EE\frac{1}{N} \tr \left(G \Tilde{A} Q \right)+4t\EE M_t^N(z) \EE \frac{1}{N} \tr \left(GQ \right) \nonumber
\end{align} where $\Tilde{A}=2\sqrt{t}A$. We now consider the term \begin{equation}\label{eq:A:term for GIBP}
    -2\EE\frac{1}{N} \tr \left(G \Tilde{A} Q \right)=-\frac{2}{N}\sum_{i,j} Q_{ii}\EE \left[G_{ij}\Tilde{A}_{ji} \right].
\end{equation} A computation involving the resolvent identity \eqref{eq:resolvent} shows that \begin{equation*}
    \frac{\partial G_{kl}}{\partial A_{ij}}=\begin{cases}
        G_{ki}G_{ji}+G_{kj}G_{il},& \text{if } i\neq j,\\
        G_{ki}G_{jl}, & \text{if } i=j
    \end{cases}.
\end{equation*} Applying Gaussian integration by parts to \eqref{eq:A:term for GIBP} yields \begin{equation*}
    -2\EE\frac{1}{N} \tr \left(G \Tilde{A} Q \right)=\frac{-8}{N^2}\EE\sum_{i,j}Q_{ii}G_{ij}^2-\frac{4t}{N}\EE M_t^{N}(z)\tr (QG),
\end{equation*} which when combined with \eqref{eq:A:Resolvent identity step} gives \begin{align}\label{eq:A:Step after GIBP}
    \EE M_t^N(z) - S^N\left(z-2t\EE M_t^N(z)\right) &=\frac{-8}{N^2}\EE\sum_{i,j}Q_{ii}G_{ij}^2-\frac{4t}{N}\EE M_t^{N}(z)\tr (QG)\\
    &\quad+4t\EE M_t^N(z) \EE \frac{1}{N} \tr \left(GQ \right). \nonumber
\end{align} We now fix $z=E+i\eta\in\CC_+$. By the Ward identity 
\eqref{eq:ward} \begin{align*}
    \left|\frac{8}{N^2}\EE\sum_{i,j}Q_{ii}G_{ij}^2 \right|&\leq \EE \frac{8}{N^2}\sum_{j}|Q_{ii}|\sum_{j}|G_{ij}|^2\\
    &\leq \EE \frac{8}{N^2\eta}\sum_{i}|Q_{ii}|\Im G_{ii}\\
    &\leq \frac{8}{N\eta^3}.
\end{align*} For the difference $4t\EE M_t^N(z) \EE \frac{1}{N} \tr \left(GQ \right)-\frac{4t}{N}\EE M_t^{N}(z)\tr (QG)$, note that \begin{align*}
    \left|\frac{4t}{N}\tr \left(GQ \right) \right|&=\left| \frac{4t}{N}\sum_{i}Q_{ii G_{ii}} \right|\\
    &\leq \frac{4t}{\eta^2}.
\end{align*} It then follows from \eqref{eq:A:Concentration Conclusion} with $D$ equal to the zero matrix that\begin{align*}
    \EE & \left[\left| 4t\left(\EE M_t^N(z)\right) \frac{1}{N} \tr \left(GQ \right) -\frac{4t}{N} M_t^{N}(z)\tr (QG) \right| \right] \\
    &\qquad \leq \EE\left[\left|  M_t^N(z) \EE  -\EE M_t^{N}(z) \right| \left|\frac{4t}{N} \tr \left(GQ \right) \right| \right]\\
    &\qquad =o\left(\frac{4\max(t,t^2)}{N^{1-\eps}\eta^2} \right).
\end{align*} Thus, we conclude that \begin{equation}
    \EE M_t^N(z) - S^N\left(z-2t\EE M_t^N(z)\right)=O\left(\frac{4\max(t,t^2)}{N^{1-\eps}\eta^3} \right),
\end{equation} where $S^N(z)=\frac{2}{z}$ for all $N$. Let $M^\infty_t$ be defined as in Theorem \ref{thm:Local law theorem}, then \begin{equation*}
    M_t^\infty(z)-S^N\left(z-2t M_t^\infty(z)\right)=0.
\end{equation*} Note for each $z\in\CC_+$, $s_t=-\frac{1}{2}M_t^\infty(z)$, $\Tilde{s}_t=-\frac{1}{2}\EE M_t^N(z)$, and $s_0(z)=-\frac{1}{2}S^N(z)$ satisfy the conditions of Proposition \ref{Prop:A:stability}, (see Appendix) and hence it follows from Proposition \ref{Prop:A:stability} and \eqref{eq:Small t stability} (see Appendix) that \begin{equation}\label{eq:A:Expected value is close}
    \EE M^N_t(z)-M^\infty_t(z)=O\left(\frac{4^{1/3}\max(t,t^2)^{1/3}}{N^{1/3-\eps}\eta} \right).
\end{equation} Applying \eqref{eq:A:Concentration Conclusion} to \eqref{eq:A:Expected value is close} completes the proof of Theorem \ref{thm:Local law theorem}.


   

\subsection{Extension to uniform bound over $[0,T]$}\label{section: net to all t}

In this section we outline how to extend Theorem \ref{thm:Local law theorem} uniformly in $t \in [0,T]$. This relies on the continuity of DBM.  Without loss of generality, we work with the interval $[0, 1]$ instead of the interval $[0,T]$. Let us consider a partition of the time interval $[0,1]$ into a uniform partition with $t_k=\frac{k}{n}$, $k =0, 1, \ldots, n.$ The intervals of this partition are all equally-sized and their lengths are equal to $\frac{1}{n}.$

Let us consider $t \in (t_1, t_2)$ an intermediate time.
We have that 
\begin{align}\label{eq:A:M triangle}
&\sup_{z \in G}|M_t^{\infty}(z)-M^N_t(z)|\nonumber\\
&\leq \sup_{z \in G}|M_{t}^{\infty}(z)-M^{\infty}_{t_1}(z)| + \sup_{z \in G}|M_{t_1}^{\infty}(z)-M^{N}_{t_1}(z)|+ \sup_{z \in G}|M_{t_1}^{N}(z)-M^{N}_{t}(z)|, 
\end{align}
with $G$ being a particular subset of the complement of the hull as in the previous section.
The first term can be controlled from the Burgers equation as the solution is locally Lipschitz in time. 

For the second term of the right hand side of \eqref{eq:A:M triangle}, we have that from Theorem \ref{thm:Local law theorem}, for any $\epsilon >0$

\begin{equation}
\sup_{z \in G}|M_{t_1}^{\infty}(z)-M^N_{t_1}(z)|=O_\eps\left(\frac{1}{N^{1/3-\epsilon}}\right)
\end{equation}
with overwhelming probability, that is with probability at least $1- e^{-cN}$, for some constant $c$.

By a union bound for any $t_j$, $j=1,\ldots, n$ in the net, we have that

\begin{align}
&\mathbb{P}\left( \bigcup_{t_i}|M^{\infty}_{t_i}(z)-M_{t_i}^N(z)|=\Omega\left(\frac{C}{N^{1/3-\epsilon}}\right)\right)\nonumber\\
&\leq \sum_{i=1}^n \mathbb{P}\left(| M^{\infty}_{t_i}(z)-M_{t_i}^N(z)|=\Omega\left(\frac{C}{N^{1/3-\epsilon}}\right)\right)\nonumber\\
&\leq n e^{-CN},
\end{align}
where $g=\Omega(f)$ means $\frac{g(x)}{f(x)}$, as $x \to \infty.$

For the third term of the right hand side of \eqref{eq:A:M triangle}, using the notation $\tilde{\eta}^i_t=z-\lambda^i_t$, for $i=1, 2,\ldots, N$, we have that

\begin{equation}
|M^{N}_{t_1}(z)-M_t^N(z)| \leq \frac{2}{N}\sum_{i=1}^N\frac{|\lambda^i_t-\lambda^i_{t_1}|}{|\tilde{\eta}^i_{t_1}\tilde{\eta}^i_t|} \leq \frac{\tilde{C}|t-t_1|^{1/2-\epsilon}}{\text{Im}(z_0)^2},
\end{equation}
where we have used the regularity of the Dyson Brownian Motion driver (\cite{NualartPerez}) and the bound $|\Tilde{\eta}^i_t |\geq |\Im(z)| \geq |\Im(z_0)|$ where $z_0 \in \mathbb{H}$ such that $\Im(z_0)\leq \min_{z\in G}(\Im(z))$.

Using the notation $\hat{C}=\frac{\tilde{C}}{\text{Im}(z_0)^2}$, if we want the error to not accumulate in our net we need 

$$\hat{C}\frac{1}{n^{1/2-\epsilon}} \leq \frac{C}{N^{1/3-\epsilon}}.$$ Thus, for our partition of the time interval we have 

$$n > \frac{\hat{C}^2(N^{(1/3-\epsilon)})^2}{C^2},$$ for $\hat{C}$ and $C$ some constants. It then follows from \eqref{eq:A:M triangle}, that \begin{equation}\label{eq:A:Uniform local}
    \sup_{t\in [0,1],\ z\in G}\left|  M_t^N(z)-M_t^\infty(z) \right|=O\left( \frac{1}{N^{\frac{1}{3}-\eps}} \right).
\end{equation}

\section{Proof of Theorem \ref{thm: Main Result}} In this section we will complete the proof of Theorem \ref{thm: Main Result}. Fix $\eps > 0$. Let $G$ be a suitable compact subset of $\CC_+$ and let $\Tilde{G}$ be a compact subset of $\CC_+$ such that $g_t^N(G)\subseteq \Tilde{G}$ with overwhelming probability (see \eqref{eq:A:set containment} for the existence of such a $\Tilde{G}$). Begin by defining $\eta:=\min_{z\in\Tilde{G}}(\Im z) > 0$. Note that \begin{align}\label{eq:A:g difference}
    |g_t^N(z)-g_t^\infty(z)|&=\left|\int_0^t M_s^N(g_s^N(z))-M_s^\infty(g_s^\infty(z))ds \right|\\
        &\leq\left|\int_0^t M_s^N(g_s^N(z))-M_s^\infty(g_s^N(z))ds \right| \\
        &\quad \quad +\left|\int_0^t M_s^\infty(g_s^N(z))-M_s^\infty(g_s^\infty(z))ds \right|.\nonumber
\end{align} For the term $ M_s^N(g_s^N(z))-M_s^\infty(g_s^N(z))$, observe that from Theorem \ref{thm:Local law theorem} \begin{equation*}
    \sup_{z\in \Tilde{G}}\left|  M_s^N(z)-M_s^\infty(z) \right|=O\left( \frac{4T^2}{N^{\frac{1}{3}-\eps}} \right),
\end{equation*} for fixed $s\in [0,T]$ with overwhelming probability. From the argument in Section \ref{section: net to all t} this can be extended to \begin{equation}\label{eq:A: Local law uniformly bounded}
    \sup_{s\in [0,T],\ z\in \Tilde{G}}\left|  M_s^N(z)-M_s^\infty(z) \right|=O\left( \frac{4T^2}{N^{\frac{1}{3}-\eps}} \right).
\end{equation} For the term $M_s^\infty(g_s^N(z))-M_s^\infty(g_s^\infty(z))$, note that $M_s^\infty$ is at most $\frac{2}{\eta^2}$-Lipschitz on $\Tilde{G}$, and hence \begin{equation}\label{eq:A:Lipschitz bound for Gronwall's}
    \left|M_s^\infty(g_s^N(z))-M_s^\infty(g_s^\infty(z))\right|\leq \frac{2}{\eta^2}|g_t^N(z)-g_t^\infty(z)|.
\end{equation} From \eqref{eq:A:g difference}, \eqref{eq:A: Local law uniformly bounded}, and \eqref{eq:A:Lipschitz bound for Gronwall's}, we conclude that \begin{equation*}
     |g_t^N(z)-g_t^\infty(z)|\leq O\left( \frac{4T^2}{N^{\frac{1}{3}-\eps}} \right)+\int_0^t\frac{2}{\eta^2}|g_s^N(z)-g_s^\infty(z)|ds.
\end{equation*} Theorem \ref{thm: Main Result} then follows from Gr\"onwall's iequality.

\newpage

\appendix{\huge{\textbf{}}}

\section{Stability} 

The following is essentially a result of O'Rourke and Vu (\cite{SeanVu}). We provide the details for the time change for the convenience of the reader.
\begin{proposition}[Stability for positive time]\label{Prop:A:stability}
   Let $t>0$ and $z,s_t,\Tilde{s}_t$ be elements of the upper half-plane such that \begin{equation}\label{eq:A:exact self-consist solution}
       s_t=s_0(z+4 t s_t),
   \end{equation} and \begin{equation}\label{eq:A:approx self-consist solution}
       \Tilde{s}_t=s_0(z+4 t \Tilde{s}_t)+O(\eps),
   \end{equation} with $s_0(z)=\int_\RR\frac{d\mu_0}{x-z}$ for some compactly supported probability measure $\mu_0$, some $R\geq z$, and small $\eps>0$. Additionally assume there exists $\eta>0$ such that $\Im(z)\geq \eta$. Then $s,s'=O(1)$ and \begin{equation}\label{eq:A:arb stability result}
       s_t=\Tilde{s}_t+O\left(\frac{\eps^{1/3}}{(4t)^{2/3}\eta} \right).
   \end{equation}
\end{proposition}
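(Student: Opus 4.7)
The approach is to adapt the stability argument of O'Rourke--Vu for self-consistent Stieltjes transform equations, carefully tracking the time-dependence introduced by the factor $4t$ in the arguments $z + 4t s_t$ and $z + 4t \tilde{s}_t$.

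First I would verify the a priori bound $s_t, \tilde{s}_t = O(1)$. Since $\mu_0$ is a compactly supported probability measure, $|s_0(w)| \leq 1/\Im(w)$ for every $w \in \CC_+$. Because $s_t$ and $\tilde{s}_t$ lie in the upper half-plane and $t > 0$, the shifts $z + 4t s_t$ and $z + 4t \tilde{s}_t$ have imaginary part at least $\Im(z) \geq \eta$, so \eqref{eq:A:exact self-consist solution} and \eqref{eq:A:approx self-consist solution} give $|s_t|, |\tilde{s}_t| \leq 1/\eta + O(\eps) = O(1)$ for $\eps$ sufficiently small.

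Setting $\delta := s_t - \tilde{s}_t$ and subtracting the two self-consistency relations, I would use the telescoping identity
\[
s_0(w_1) - s_0(w_2) = (w_1 - w_2) \int_\RR \frac{d\mu_0(x)}{(x-w_1)(x-w_2)}
\]
with $w_1 = z + 4t s_t$ and $w_2 = z + 4t \tilde{s}_t$ to get the linear-in-$\delta$ relation $(1 - 4tI)\delta = O(\eps)$, where $I := \int d\mu_0(x)/((x-w_1)(x-w_2))$. When $|1 - 4tI|$ is bounded away from zero this yields $|\delta| = O(\eps)$, but at a spectral edge it can degenerate. To obtain a uniform bound, I would Taylor expand $s_0(w_1) - s_0(w_2)$ to third order around $w_2$ using the standard bound $|s_0^{(k)}(w)| \leq k!/\Im(w)^{k+1}$, producing a cubic inequality $a_3 \delta^3 + a_2 \delta^2 + a_1 \delta = O(\eps)$ whose coefficients are controlled in terms of $\eta$ and $t$, with the Lagrange remainder absorbed into the cubic term.

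The final step is a quantitative polynomial root-stability estimate (the essence of the O'Rourke--Vu argument) giving $|\delta| = O(\eps^{1/3}/((4t)^{2/3}\eta))$ uniformly, with the $1/3$ exponent realized precisely at the spectral edge where the linear and quadratic contributions simultaneously collapse. I expect the main obstacle to be handling that degenerate regime: one must verify that the cubic coefficient retains enough structural non-vanishing, using the sign of $\Im s_0^{(k)}(w)$ for $w \in \CC_+$, to rule out simultaneous degeneracy of all three coefficients of the polynomial. Once that step is confirmed, substituting the explicit $t$- and $\eta$-dependence in the coefficient bounds produces the stated rate.
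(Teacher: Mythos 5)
Your setup (the a priori bound $|s_0(w)|\le 1/\Im(w)$ and the telescoping identity $s_0(w_1)-s_0(w_2)=(w_1-w_2)\int_\RR \frac{d\mu_0(x)}{(x-w_1)(x-w_2)}$) matches the paper up through the relation $\int_\RR \frac{d\mu_0(x)}{(x-w_1)(x-w_2)} = \frac{1}{4t}+O\left(\eps/|w_1-w_2|\right)$. From there you diverge onto a route with a genuine gap. You propose to Taylor expand $s_0$ to third order and invoke a polynomial root-stability estimate, which requires the cubic coefficient (essentially $s_0'''(\tilde{w}_t)$) to stay non-degenerate precisely when the linear coefficient $s_0'(\tilde{w}_t)-\frac{1}{4t}$ and the quadratic coefficient $s_0''(\tilde{w}_t)$ both collapse. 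You flag this as the step still to be confirmed, but the tool you suggest for confirming it --- a definite sign of $\Im s_0^{(k)}(w)$ on $\CC_+$ --- is not available: only $s_0$ itself is Herglotz; already $s_0'(w)=\int_\RR (x-w)^{-2}\,d\mu_0(x)$ has no definite imaginary part (for $\mu_0=\delta_0$ one computes $\Im s_0'(\pm 1+i)=\mp 1/2$), and nothing prevents $s_0'''$ from vanishing at isolated points of $\CC_+$. So the degenerate regime, which is exactly where the $\eps^{1/3}$ exponent is born, is left unproved.

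The paper (following O'Rourke--Vu) closes this hole without any Taylor expansion. Taking imaginary parts of $s_t=s_0(w_t)$ and using $\Im(w_t)=\Im(z)+4t\,\Im(s_t)\ge 4t\,\Im(s_t)$ yields the a priori bound $\int_\RR\frac{d\mu_0(x)}{|x-w_t|^2}\le\frac{1}{4t}$, and similarly for $\tilde{w}_t$ up to $O(\eps/\eta)$. The AM--GM inequality bounds the cross term $\bigl|\tfrac{1}{(x-w_t)(x-\tilde{w}_t)}\bigr|$ by the average of $|x-w_t|^{-2}$ and $|x-\tilde{w}_t|^{-2}$, and the strictness of that inequality is quantified by the separation of the two points: the real part of the cross-term integral is at most $\left(1-C|w_t-\tilde{w}_t|^2\right)\left(\frac{1}{4t}+O(\eps/\eta)\right)$. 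Comparing this with $\frac{1}{4t}+O\left(\eps/|w_t-\tilde{w}_t|\right)$ forces $|w_t-\tilde{w}_t|^3=O(4t\,\eps/\eta)$, which is the cubic relation you were after, obtained with no hypothesis on higher derivatives of $s_0$. To complete your outline you must either substitute this argument for the polynomial root-stability step, or supply an actual proof of pointwise non-degeneracy of the cubic Taylor coefficient, which I do not believe holds for general compactly supported $\mu_0$.
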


\begin{proof}
    Showing $s_t,\Tilde{s}_t=O(1)$ requires no change from O'Rourke and Vu. Let $w_t=z+4t s_t$ and $\Tilde{w}_t=z+4t \Tilde{s}_t$. We aim now to show $|w_t-\Tilde{w}_t|$ is sufficiently small It follows from \eqref{eq:A:exact self-consist solution} and \eqref{eq:A:approx self-consist solution} that\begin{equation}\label{eq:A:diffference as rational}
        s_0(w_t)-s_0(\Tilde{w}_t)=\frac{w_t-\Tilde{w}_t}{4t}+O(\eps).
    \end{equation} It additionally follows from the definition of $s_0$ that \begin{equation*}
        s_0(w_t)-s_0(\Tilde{w}_t)=(w_t-\Tilde{w}_t)\int_{\RR}\frac{d\mu_0(z)}{(x-w_t)(x-\Tilde{w}_t)},
    \end{equation*} which when combined with \eqref{eq:A:diffference as rational} yields\begin{equation}\label{eq:A:integral of difference equation}
        \int_{\RR}\frac{d\mu_0(z)}{(x-w_t)(x-\Tilde{w}_t)}=\frac{1}{4t}+O\left(\frac{\eps}{|w_t-\Tilde{w}_t|} \right).
    \end{equation} On the other hand \begin{align}
        \Im (s_t) &=\Im (s_0(w_t)) \nonumber\\
                  &= \Im(w_t)\int_{\RR}\frac{d\mu_0(x)}{|x-w_t|^2},
    \end{align} and rearranging yields\begin{equation}\label{eq:A:integral of w bound}
        \int_{\RR}\frac{d\mu_0(x)}{|x-w_t|^2}=\leq \frac{1}{4t}.
    \end{equation} An identical argument yields \begin{equation}\label{eq:A:integral of tilde w bound}
        \int_{\RR}\frac{d\mu_0(x)}{|x-\Tilde{w}_t|^2}=\leq \frac{1}{4t}+O\left(\frac{\eps}{\eta} \right).
    \end{equation}

 From the arithmetic mean-geometric mean inequality, we have
$$
\left|\frac{1}{(x-w_t)\left(x-\tilde{w}_t\right)}\right| \leq \frac{1}{2} \frac{1}{|x-w_t|^2}+\frac{1}{2} \frac{1}{\left|x-\tilde{w}_t\right|^2} .
$$
Since $w_t \neq \tilde{w}_t$, it follows that
$$
\left|\operatorname{Re}\left[\frac{1}{(x-w_t)\left(x-\tilde{w}_t\right)}\right]\right|=(1-\delta)\left(\frac{1}{2} \frac{1}{|x-w_t|^2}+\frac{1}{2} \frac{1}{\left|x-\tilde{w}_t\right|^2}\right)
$$
for some $\delta>0$. Then we have
$$
|x-w_t|=(1+O(\delta))\left|x-\tilde{w}_t\right| .
$$
and
$$
\angle\left(x-w_t, x-\tilde{w}_t\right)=O\left(\delta^{1 / 2}\right) .
$$
Since $x, w_t, \tilde{w}_t=O(1)$, we obtain $w_t-\tilde{w}_t=O\left(\delta^{1 / 2}\right)$.
We obtain that
 $\operatorname{Re}\left[\frac{1}{(x-w_t)\left(x-\tilde{w}_t\right)}\right] \leq\left(1-C\left|w_t-\tilde{w}_t\right|^2\right)\left(\frac{1}{2} \frac{1}{|x-w_t|^2}+\frac{1}{2} \frac{1}{\left|x-\tilde{w}_t\right|^2}\right)$ for some $C>0$, and hence
$$
\operatorname{Re} \int_{\mathbb{R}} \frac{d \mu(x)}{(x-w_t)\left(x-\tilde{w}_t\right)} \leq\left(1-C\left|w_t-\tilde{w}_t\right|^2\right)\left(\frac{1}{4t}+O\left(\frac{\varepsilon}{\eta}\right)\right).
$$

We have that 
\begin{equation}
\left(1-C|w_t-\tilde{w}_t|^2\right)\left(\frac{1}{4t}+O\left(\frac{\epsilon}{\eta}\right)\right)=\frac{1}{4t}+O\left( \frac{\epsilon}{|w_t-\tilde{w}_t|}\right).
\end{equation}
Then, 
\begin{equation}
\frac{1}{4t}+O\left(\frac{\epsilon}{\eta}\right)-\frac{c}{4t}|w_t-\tilde{w}_t|^2-C|w_t-\tilde{w}_t|^2O\left(\frac{\epsilon}{\eta}\right)=\frac{1}{4t}+O\left(\frac{\epsilon}{|w_t-\tilde{w}_t|}\right).
\end{equation}

Furthermore, we obtain
\begin{equation}
O\left(\epsilon\right)=|w_t-\tilde{w}_t|O\left(\frac{\epsilon}{\eta}\right)-\frac{C}{4t}|w_t-\tilde{w}_t|^3-C|w_t-\tilde{w}_t|^3O\left(\frac{\epsilon}{\eta}\right).
\end{equation}

Using that the first and the third term are bounded we obtain
\begin{equation}
O(\epsilon)+O\left(\frac{\epsilon}{\eta}\right)=|w_t-\tilde{w}_t|^3\frac{C}{4t}.
\end{equation}

Thus, we have that 
\begin{equation}
|w_t-\tilde{w}_t|=O\left(\left(\frac{4t\epsilon}{\eta}\right)^{1/3}\right),
\end{equation}
and
\begin{equation}
|s_t-\tilde{s}_t|=O\left(\frac{\epsilon^{1/3}}{(4t)^{2/3}\eta^{1/3}}\right).
\end{equation}
\end{proof} 

For small $t$  the following observation is useful. Fix $\eta>0$, then $s_0$ is Lipschitz with Lipschitz constant at most $\frac{1}{\eta^2
}$ on $\{z\ :\ \Im(z) \geq \eta  \}$. Thus for $s_t$ and $\Tilde{s}_t$ as in \eqref{eq:A:exact self-consist solution} and \eqref{eq:A:approx self-consist solution} one has \begin{equation}\label{eq:Small t stability}
    s_t-\Tilde{s}_t=\frac{4t}{\eta^2}\left( s_t -\Tilde{s}_t \right) + O\left( \eps \right).
\end{equation}

\bibliography{MultipleSLEfinal2.bib}

\begin{thebibliography}{10}

\bibitem{kyle2}
R.~a. Adamczak, R.~Lata\l~a, and R.~Meller.
\newblock Hanson-{W}right inequality in {B}anach spaces.
\newblock {\em Ann. Inst. Henri Poincar\'{e} Probab. Stat.}, 56(4):2356--2376,
  2020.

\bibitem{akemann2011oxford}
G.~Akemann, J.~Baik, and P.~Di~Francesco.
\newblock {\em The Oxford handbook of random matrix theory}.
\newblock Oxford University Press, 2011.

\bibitem{Zeit}
A.~Anderson, Greg W.~Guionnet and O.~Zeitouni.
\newblock {\em An introduction to Random Matrices}.
\newblock Cambridge Studies in Advanced Mathematics. Cambridge University
  Press, Cambridge, 2010.

\bibitem{kyle8}
Z.~Bai and J.~W. Silverstein.
\newblock {\em Spectral analysis of large dimensional random matrices}.
\newblock Springer Series in Statistics. Springer, New York, second edition,
  2010.

\bibitem{BefPeltolaWu}
V.~Beffara, E.~Peltola, and H.~Wu.
\newblock On the uniqueness of global multiple {SLE}s.
\newblock {\em Ann. Probab.}, 49(1):400--434, 2021.

\bibitem{BLM}
D.~Beliaev, T.~J. Lyons, and V.~Margarint.
\newblock Continuity in {$\kappa$} in {${\rm SLE}_\kappa$} theory using a
  constructive method and rough path theory.
\newblock {\em Ann. Inst. Henri Poincar\'{e} Probab. Stat.}, 57(1):455--468,
  2021.

\bibitem{AtulVladpaper}
D.~Beliaev, V.~Margarint, and A.~Shekhar.
\newblock Continuity of zero-hitting times of {B}essel processes and welding
  homeomorphisms of {SLE{$_{\kappa}$}}.
\newblock {\em ALEA Lat. Am. J. Probab. Math. Stat.}, 18(1):69--79, 2021.

\bibitem{BAB}
G.~Ben~Arous and P.~Bourgade.
\newblock Extreme gaps between eigenvalues of random matrices.
\newblock {\em Ann. Probab.}, 41(4):2648--2681, 2013.

\bibitem{Antti}
F.~Benaych-Georges and A.~Knowles.
\newblock {\em Lectures on the local semicircle law for Wigner matrices}.
\newblock Advanced Topics in Random Matrices, Panoramas et Synthèses, 2016.

\bibitem{cardy2003stochastic}
J.~Cardy.
\newblock Stochastic loewner evolution and dyson's circular ensembles.
\newblock {\em Journal of Physics A: Mathematical and General}, 36(24):L379,
  2003.

\bibitem{JVperturbation}
J.~Chen and V.~Margarint.
\newblock Perturbations of multiple {S}chramm-{L}oewner evolution with two
  non-colliding {D}yson {B}rownian motions.
\newblock {\em Stochastic Process. Appl.}, 151:553--569, 2022.

\bibitem{delMonacoetc}
A.~del Monaco, I.~Hotta, and S.~Schlei\ss~inger.
\newblock Tightness results for infinite-slit limits of the chordal {L}oewner
  equation.
\newblock {\em Comput. Methods Funct. Theory}, 18(1):9--33, 2018.

\bibitem{delMonaco2}
A.~del Monaco and S.~Schlei\ss~inger.
\newblock Multiple {SLE} and the complex {B}urgers equation.
\newblock {\em Math. Nachr.}, 289(16):2007--2018, 2016.

\bibitem{delMS2016}
A.~del Monaco and S.~Schlei\ss~inger.
\newblock Multiple {SLE} and the complex {B}urgers equation.
\newblock {\em Math. Nachr.}, 289(16):2007--2018, 2016.

\bibitem{dubedat}
J.~Dub\'{e}dat.
\newblock Commutation relations for {S}chramm-{L}oewner evolutions.
\newblock {\em Comm. Pure Appl. Math.}, 60(12):1792--1847, 2007.

\bibitem{13}
L.~Erd\H{o}s, A.~Knowles, and H.-T. Yau.
\newblock Averaging fluctuations in resolvents of random band matrices.
\newblock {\em Ann. Henri Poincar\'{e}}, 14(8):1837--1926, 2013.

\bibitem{15}
L.~Erd\H{o}s, B.~Schlein, and H.-T. Yau.
\newblock Local semicircle law and complete delocalization for {W}igner random
  matrices.
\newblock {\em Comm. Math. Phys.}, 287(2):641--655, 2009.

\bibitem{14}
L.~Erd\H{o}s, B.~Schlein, and H.-T. Yau.
\newblock Semicircle law on short scales and delocalization of eigenvectors for
  {W}igner random matrices.
\newblock {\em Ann. Probab.}, 37(3):815--852, 2009.

\bibitem{16}
L.~Erd\H{o}s, B.~Schlein, and H.-T. Yau.
\newblock Wegner estimate and level repulsion for {W}igner random matrices.
\newblock {\em Int. Math. Res. Not. IMRN}, (3):436--479, 2010.

\bibitem{17}
L.~Erd\H{o}s, B.~Schlein, and H.-T. Yau.
\newblock Universality of random matrices and local relaxation flow.
\newblock {\em Invent. Math.}, 185(1):75--119, 2011.

\bibitem{18}
L.~Erd\H{o}s, B.~Schlein, H.-T. Yau, and J.~Yin.
\newblock The local relaxation flow approach to universality of the local
  statistics for random matrices.
\newblock {\em Ann. Inst. Henri Poincar\'{e} Probab. Stat.}, 48(1):1--46, 2012.

\bibitem{19}
L.~Erd\H{o}s and H.-T. Yau.
\newblock Universality of local spectral statistics of random matrices.
\newblock {\em Bull. Amer. Math. Soc. (N.S.)}, 49(3):377--414, 2012.

\bibitem{23}
L.~Erd\H{o}s and H.-T. Yau.
\newblock {\em A dynamical approach to random matrix theory}, volume~28 of {\em
  Courant Lecture Notes in Mathematics}.
\newblock Courant Institute of Mathematical Sciences, New York; American
  Mathematical Society, Providence, RI, 2017.

\bibitem{21}
L.~Erd\H{o}s, H.-T. Yau, and J.~Yin.
\newblock Universality for generalized {W}igner matrices with {B}ernoulli
  distribution.
\newblock {\em J. Comb.}, 2(1):15--81, 2011.

\bibitem{20}
L.~Erd\H{o}s, H.-T. Yau, and J.~Yin.
\newblock Bulk universality for generalized {W}igner matrices.
\newblock {\em Probab. Theory Related Fields}, 154(1-2):341--407, 2012.

\bibitem{22}
L.~Erd\H{o}s, H.-T. Yau, and J.~Yin.
\newblock Rigidity of eigenvalues of generalized {W}igner matrices.
\newblock {\em Adv. Math.}, 229(3):1435--1515, 2012.

\bibitem{erdos2017dynamical}
L.~Erdos and H.-T. Yau.
\newblock {A dynamical approach to random matrix theory}.
\newblock {\em Courant Lecture Notes in Mathematics}, 28, 2017.

\bibitem{Jamespaper}
J.~Foster, T.~Lyons, and V.~Margarint.
\newblock An asymptotic radius of convergence for the {L}oewner equation and
  simulation of {$SLE_{\kappa}$} traces via splitting.
\newblock {\em J. Stat. Phys.}, 189(2):Paper No. 18, 14, 2022.

\bibitem{FrizYuan}
P.~K. Friz, H.~Tran, and Y.~Yuan.
\newblock Regularity of {SLE} in {$(t,\kappa)$} and refined {GRR} estimates.
\newblock {\em Probab. Theory Related Fields}, 180(1-2):71--112, 2021.

\bibitem{lawlernew}
V.~O. Healey and G.~F. Lawler.
\newblock N-sided radial schramm--loewner evolution.
\newblock {\em Probability Theory and Related Fields}, 181(1):451--488, 2021.

\bibitem{hydrodinamiclimit}
I.~Hotta and M.~Katori.
\newblock Hydrodynamic limit of multiple {SLE}.
\newblock {\em J. Stat. Phys.}, 171(1):166--188, 2018.

\bibitem{HotaS}
I.~Hotta and S.~Schlei\ss~inger.
\newblock Limits of radial multiple {SLE} and a {B}urgers-{L}oewner
  differential equation.
\newblock {\em J. Theoret. Probab.}, 34(2):755--783, 2021.

\bibitem{NVscheme}
V.~M. Jiaming~Chen.
\newblock {Convergence of Ninomiya-Victoir Splitting Scheme to Schramm-Loewner
  Evolutions}.
\newblock {\em https://arxiv.org/pdf/2110.10631.pdf}.

\bibitem{SLEcont}
F.~Johansson~Viklund, S.~Rohde, and C.~Wong.
\newblock On the continuity of {$\text{SLE}_\kappa$} in {$\kappa$}.
\newblock {\em Probab. Theory Related Fields}, 159(3-4):413--433, 2014.

\bibitem{Katoriwelding}
M.~Katori and S.~Koshida.
\newblock Conformal welding problem, flow line problem, and multiple
  {S}chramm-{L}oewner evolution.
\newblock {\em J. Math. Phys.}, 61(8):083301, 25, 2020.

\bibitem{Kemp}
A.~Kemppainen.
\newblock {\em Schramm-{L}oewner evolution}, volume~24 of {\em SpringerBriefs
  in Mathematical Physics}.
\newblock Springer, Cham, 2017.

\bibitem{peltolakyto}
K.~Kyt\"{o}l\"{a} and E.~Peltola.
\newblock Pure partition functions of multiple {SLE}s.
\newblock {\em Comm. Math. Phys.}, 346(1):237--292, 2016.

\bibitem{lawler2008conformally}
G.~F. Lawler.
\newblock {\em Conformally invariant processes in the plane}.
\newblock Number 114. American Mathematical Soc., 2008.

\bibitem{Lawlerschrammwerner}
G.~F. Lawler, O.~Schramm, and W.~Werner.
\newblock Conformal invariance of planar loop-erased random walks and uniform
  spanning trees.
\newblock {\em Ann. Probab.}, 32(1B):939--995, 2004.

\bibitem{LenVik}
J.~Lenells and F.~Viklund.
\newblock Schramm's formula and the {G}reen's function for multiple {SLE}.
\newblock {\em J. Stat. Phys.}, 176(4):873--931, 2019.

\bibitem{Noeigen}
S.~Lyu, Y.~Chen, and E.~Fan.
\newblock Asymptotic gap probability distributions of the {G}aussian unitary
  ensembles and {J}acobi unitary ensembles.
\newblock {\em Nuclear Phys. B}, 926:639--670, 2018.

\bibitem{VladRMT}
V.~Margarint.
\newblock {Proof of the Weak Local Law for Wigner Matrices using Resolvent
  Expansions}.
\newblock {\em https://arxiv.org/pdf/1808.07092.pdf}.

\bibitem{NualartPerez}
D.~Nualart and V.~P\'{e}rez-Abreu.
\newblock On the eigenvalue process of a matrix fractional {B}rownian motion.
\newblock {\em Stochastic Process. Appl.}, 124(12):4266--4282, 2014.

\bibitem{SeanVu}
S.~O'Rourke and V.~Vu.
\newblock Universality of local eigenvalue statistics in random matrices with
  external source.
\newblock {\em Random Matrices Theory Appl.}, 3(2):1450005, 37, 2014.

\bibitem{PeltolaWu}
E.~Peltola and H.~Wu.
\newblock Global and local multiple {SLE}s for {$\kappa \leq 4$} and connection
  probabilities for level lines of {GFF}.
\newblock {\em Comm. Math. Phys.}, 366(2):469--536, 2019.

\bibitem{RohdeSchramm}
S.~Rohde and O.~Schramm.
\newblock Basic properties of {SLE}.
\newblock {\em Ann. of Math. (2)}, 161(2):883--924, 2005.

\bibitem{oliversc}
O.~Roth and S.~Schleissinger.
\newblock The {S}chramm-{L}oewner equation for multiple slits.
\newblock {\em J. Anal. Math.}, 131:73--99, 2017.

\bibitem{MultipleSLEcft}
K.~Sakai.
\newblock Multiple {S}chramm-{L}oewner evolutions for conformal field theories
  with {L}ie algebra symmetries.
\newblock {\em Nuclear Phys. B}, 867(2):429--447, 2013.

\bibitem{schramm2000scaling}
O.~Schramm.
\newblock Scaling limits of loop-erased random walks and uniform spanning
  trees.
\newblock {\em Israel Journal of Mathematics}, 118(1):221--288, 2000.

\bibitem{SLEGFF}
O.~Schramm and S.~Sheffield.
\newblock Contour lines of the two-dimensional discrete {G}aussian free field.
\newblock {\em Acta Math.}, 202(1):21--137, 2009.

\bibitem{Smirnovpercolation}
S.~Smirnov.
\newblock Critical percolation in the plane: conformal invariance, {C}ardy's
  formula, scaling limits.
\newblock {\em C. R. Acad. Sci. Paris S\'{e}r. I Math.}, 333(3):239--244, 2001.

\bibitem{Stasising}
S.~Smirnov.
\newblock Conformal invariance in random cluster models. {I}. {H}olomorphic
  fermions in the {I}sing model.
\newblock {\em Ann. of Math. (2)}, 172(2):1435--1467, 2010.

\bibitem{33}
T.~Tao and V.~Vu.
\newblock Random matrices: universality of local eigenvalue statistics up to
  the edge.
\newblock {\em Comm. Math. Phys.}, 298(2):549--572, 2010.

\bibitem{32}
T.~Tao and V.~Vu.
\newblock Random matrices: universality of local eigenvalue statistics.
\newblock {\em Acta Math.}, 206(1):127--204, 2011.

\bibitem{36}
T.~Tao and V.~Vu.
\newblock The {W}igner-{D}yson-{M}ehta bulk universality conjecture for
  {W}igner matrices.
\newblock {\em Electron. J. Probab.}, 16:no. 77, 2104--2121, 2011.

\bibitem{35}
T.~Tao and V.~Vu.
\newblock Random matrices: the universality phenomenon for {W}igner ensembles.
\newblock In {\em Modern aspects of random matrix theory}, volume~72 of {\em
  Proc. Sympos. Appl. Math.}, pages 121--172. Amer. Math. Soc., Providence, RI,
  2014.

\bibitem{34}
T.~Tao and V.~Vu.
\newblock Random matrices: universality of local spectral statistics of
  non-{H}ermitian matrices.
\newblock {\em Ann. Probab.}, 43(2):782--874, 2015.

\bibitem{multipleSLE0}
N.-G.~K. Tom~Alberts, Sung-Soo~Byun and N.~G. Makarov.
\newblock {Pole Dynamics and an Integral of Motion for Multiple SLE(0)}.
\newblock {\em https://arxiv.org/pdf/2011.05714/}.

\bibitem{tranconvergence}
H.~Tran.
\newblock Convergence of an algorithm simulating {L}oewner curves.
\newblock {\em Ann. Acad. Sci. Fenn. Math.}, 40(2):601--616, 2015.

\bibitem{wigner1955}
E.~P. Wigner.
\newblock Characteristic vectors of bordered matrices with infinite dimensions.
\newblock {\em Annals of Mathematics}, 62(3):548--564, 1955.

\bibitem{wishart1928generalised}
J.~Wishart.
\newblock The generalised product moment distribution in samples from a normal
  multivariate population.
\newblock {\em Biometrika}, pages 32--52, 1928.

\bibitem{zh2}
D.~Zhan.
\newblock Two-curve {G}reen's function for 2-{SLE}: the interior case.
\newblock {\em Comm. Math. Phys.}, 375(1):1--40, 2020.

\bibitem{zh1}
D.~Zhan.
\newblock Two-curve {G}reen's function for 2-{SLE}: the boundary case.
\newblock {\em Electron. J. Probab.}, 26:Paper No. 32, 58, 2021.

\end{thebibliography}
\bibliographystyle{abbrv}

\end{document}